\newcommand{\R}{\mathbb{R}}
\newcommand{\diff}{\mathrm{d}}
\let\P\relax
\newcommand{\P}[1]{\mathbb{P}[#1]}
\newcommand{\E}[1]{\mathbb{E}[#1]}
\newcommand{\suchthat}{\mathrm{s.t.}\quad}
\newcommand{\normal}{\mathcal{N}}
\newcommand{\indicator}{\mathds{1}}
\newcommand{\chol}[1]{(#1)^{\frac{1}{2}}}
\newcommand{\maxeig}[1]{\lambda_{\max} \left\{#1 \right\}}
\newcommand{\blkdiag}[1]{\mathrm{blkdiag}\left(#1\right)}
\newcommand{\diag}[1]{\mathrm{diag}(#1)}
\crefname{align}{}{}
\crefname{equation}{}{}
\crefname{figure}{Fig.}{Figs.}
\crefname{table}{Table}{Tables}
\crefname{theorem}{Theorem}{Theorems}
\crefname{definition}{Definition}{Definitions}
\crefname{lemma}{Lemma}{Lemmas}
\crefname{remark}{Remark}{Remarks}
\crefname{assumption}{Assumption}{Assumptions}
\crefname{proof}{Proof}{Proofs}
\crefname{algorithm}{Algorithm}{Algorithms}
\crefname{problem}{Problem}{Problems}
\crefname{proposition}{Proposition}{Propositions}
\crefname{corollary}{Corollary}{Corollaries}
\crefname{section}{Section}{Sections}
\newtheorem{problem}{Problem}
\newtheorem{proposition}{Proposition}
\newtheorem{remark}{Remark}
\newtheorem{theorem}{Theorem}
\def\shortOrFull{2} 
\def\showChanges{0} 
\newcommand{\highlight}[1]{%
    \ifnum\showChanges=1
        \textcolor{red}{#1}%
    \else
        #1%
    \fi
}
\newcommand{\mathhl}[1]{
    \ifnum\showChanges=1
        \mathcolor{red}{#1}
    \else
        #1
    \fi
}
\title{\LARGE \bf
Chance-Constrained Gaussian Mixture Steering to a Terminal Gaussian Distribution
}
\author{Naoya Kumagai\thanks{This work was supported by the U.S. Air Force Office of Scientific Research through research grant FA9550-23-1-0512. N. Kumagai acknowledges support for his graduate studies from the Shigeta Education Fund. Naoya Kumagai and Kenshiro Oguri are with the School of Aeronautics and Astronautics, Purdue University, West Lafayette, Indiana, 47907, USA. Emails: 
    {\tt\small nkumagai@purdue.edu},
    {\tt\small koguri@purdue.edu}
    }  and Kenshiro Oguri}
\begin{document}

\maketitle
\thispagestyle{empty}
\pagestyle{empty}

\begin{abstract}
We address the problem of finite-horizon control of a discrete-time linear system, where the initial state distribution follows a Gaussian mixture model, the terminal state must follow a specified Gaussian distribution, and the state and control inputs must obey chance constraints. We show that, throughout the time horizon, the state and control distributions are fully characterized by Gaussian mixtures. We then formulate the cost, distributional terminal constraint, and affine/2-norm chance constraints on the state and control, as convex functions of the decision variables. This is leveraged to formulate the chance-constrained path planning problem as a single \highlight{convex optimization} problem. 
A numerical example demonstrates the effectiveness of the proposed method. 
\end{abstract}

\section{Introduction}

In this paper, we address the problem of finite-horizon control of a discrete-time linear system with an initial state distributed with a Gaussian mixture model (GM, GMM). The task is to steer the distribution to a terminal Gaussian distribution while obeying affine and 2-norm chance constraints on the state and control input. 

There has been extensive work on the linear covariance steering problem \cite{hotz_covariance_1987,chen_optimal_2016, okamoto_optimal_2018,liu_optimal_2023}. The covariance steering problem was first introduced by Hotz and Skelton \cite{hotz_covariance_1987}. More recently, Chen et al. derived the solution to the continuous linear covariance steering problem\cite{chen_optimal_2016}. The discrete linear covariance steering problem under affine chance constraints was shown to be formulated into a semidefinite programming problem \cite{okamoto_optimal_2018}. The existence and uniqueness of the optimal control law for a quadratic cost function and a change of variables for efficient computation was demonstrated in \cite{liu_optimal_2023}. 

The problem of general distribution steering has also been investigated. \cite{sivaramakrishnan_distribution_2022} uses characteristic functions (CFs) to address the most general case under linear dynamics: the initial and final distributions, as well as the process noise distribution, are all arbitrary. While offering a powerful general framework, the algorithm utilizes potentially computationally expensive tools such as nonconvex programming and quadrature for CF inversion. 
Methods in optimal transport theory \cite{chen_optimal_2016,caluya_reflected_2021} \highlight{allow} steering from arbitrary initial to final distributions. \cite{caluya_reflected_2021} extends the theory of Schrodinger bridges to account for path constraints; however, since their method relies on a Fokker Planck initial boundary value problem solver as an internal workhorse, the accuracy and reliability of the algorithm are unclear. 

Several works focus on a specific case of the problem where the initial or final distributions have a Gaussian mixture model. 
\cite{boone_non-gaussian_2022} formulates a problem of controlling a Gaussian mixture under chance constraints via a one-instance control input (as opposed to control over the entire horizon). The solution method is via nonlinear programming and thus does not provide a theoretical guarantee on the solution quality or convergence. It also applies risk allocation \cite{ono_iterative_2008} to allocate the risk between each Gaussian kernel and chance constraint.
\cite{hu_chance_2022} proposes a branch-and-bound algorithm for finding the globally optimal risk allocation between Gaussian kernels. \cite{yang_analytical_2020} models wind power uncertainty via GMMs and solves a chance-constrained unit commitment problem. \cite{ren_chance-constrained_2023} considers chance constraints under GMM uncertainty for trajectory planning of autonomous vehicles, where the uncertainty arises from the movement of other vehicles. \cite{balci_density_2023} proposes a random control policy that steers an initial Gaussian mixture to a final Gaussian mixture under deterministic linear dynamics. The solution is obtained via linear programming. 

A problem of interest yet to be addressed is the steering of an initial GM distribution to a terminal Gaussian distribution subject to state and control input chance constraints. 
The choice of this problem is motivated by common engineering scenarios; for example, when the task is to steer the distribution of an autonomous vehicle, a high-order uncertainty quantification algorithm may provide a non-Gaussian initial distribution, which can be fitted with a Gaussian mixture \cite[Ch. 3.2]{stergiopoulos_advanced_2017},\cite{dempster_maximum_1977}.  
We show that by choosing a control policy with a deterministic feedforward term and a probabilistic feedback sequence which is probabilistically chosen after observation of the initial state, the problem is formulated into a \highlight{convex optimization} problem. \highlight{As such, this work expands the set of problem settings in the distribution steering literature that can be formulated as a convex optimization.}

The contributions of this study are threefold. First, we show that using the proposed control policy, throughout the time horizon, the state and control can be fully characterized by Gaussian mixture distributions with analytical mean and covariance for each Gaussian kernel. Second, based on this result, we formulate deterministic convex formulations of 1) the terminal distributional constraint such that the final state follows a Gaussian distribution and 2) affine and 2-norm chance constraints on the state and control variables.
Third, we outline a modified version of the iterative risk allocation algorithm which reduces conservativeness in the risk allocation and hence achieves better optimality.

\section{Preliminaries} \label{sec:preliminaries}
\textit{Notation:}
$I$ represents the identity matrix of appropriate size. 
A random vector $x$ with normal distribution of mean $\mu$ and covariance matrix $\Sigma$ is denoted as $x\sim\normal(\mu, \Sigma)$. 
\highlight{
 When $x$ is GM-distributed with weights $\mathhl{(\alpha_i)_{i=1:K}}$ such that $\sum_{i=1}^K \alpha_i = 1)$, means $\mathhl{(\mu_i)_{i=1:K}}$, and covariance matrices $(\Sigma_i)_{i=1:K}$, it is denoted as $x\sim\mathrm{GMM}(\alpha_i, \mu_i, \Sigma_i)_{i=1:K}$.%
 }
The probability density function (PDF) of a vector $x$ evaluated at $\hat{x}$ is denoted as $f_x(\hat{x}$). 
If $x\sim \normal(\mu, \Sigma)$, we may write $f_x(\hat{x}) = f_{\normal}(\hat{x}; \mu, \Sigma)$ for emphasis. 
For random variables $x$ and $y$, $x|y=\hat{y}$ denotes the random variable $x$ conditioned on $y=\hat{y}$; for ease of notation, we may also use $x|\hat{y}$. 
The conditional density function of $x$ given $y=\hat{y}$ is written as $f_{x|\hat{y}}$. 
For a symmetric matrix $\Sigma$, we write $\Sigma \succ 0 (\succeq 0)$ if $\Sigma$ is positive (semi-)definite. 
For a matrix $X$, $X^{\frac{1}{2}}$ refers to the matrix such that $X^{\frac{1}{2}} \mathhl{(X^{\frac{1}{2}})}^\top = X$.
\highlight{
$\P{\cdot}, \E{\cdot}, \maxeig{\cdot}, \norm{\cdot}$ calculate probability, expected value, largest eigenvalue, and 2-norm respectively.
}
\highlight{$\bigwedge_k A_k$ denotes the intersection of the events $A_k$ for all $k$.}
$\mathhl{\mathbb{i} = \sqrt{-1}}.$

\subsection{Problem Formulation}
Consider the discrete, linear time-varying system:
\begin{equation}
    x_{k+1} = A_k x_k + B_k u_k, \quad k=0,1,\cdots,N-1
\end{equation}
with state $x_k \in \R^{n_x}$, control input $u_k \in \R^{n_u}$, and matrices $A_k, B_k$ of appropriate sizes. We assume that the initial state is distributed such that $x_0 \sim \mathrm{GMM}\mathhl{(\alpha_i, \mu_0^i, \Sigma_0^i)_{i=1:K}}$ with $K$ kernels, and seek to steer the final state $x_N$ to the desired Gaussian distribution $x_N \sim \normal(\mu_f, \Sigma_f)$.  

This paper is concerned with the following stochastic optimal control problem:
\begin{problem} \label{pr:1}
\begin{subequations} \label{eq:pr1}
\begin{align}
    &\min_{\mathhl{\pi \in \Pi}} \ 
    J = \E{\mathcal{L}(x_0, x_1, \cdots, u_0, u_1, \cdots)} \label{eq:cost-original}\\
    &\suchthat x_0 \sim \mathrm{GMM}\mathhl{(\alpha_i, \mu_0^i, \Sigma_0^i)_{i=1:K}}\\
    &x_N \sim \mathcal{N}(\mu_f, \Sigma_f) \label{eq:terminal-dist}\\
    & x_{k+1} = A_k x_k + B_k u_k\\
    &\hspace{-3pt}\P{\bigwedge_{k=0}^{N} x_k \in \mathcal{X}} \geq 1 - \Delta ,  \  \P{\bigwedge_{k=0}^{N - 1} u_k \in \mathcal{U}} \geq 1 - \Gamma  \label{eq:CC-original}%
\end{align}%
\end{subequations}%
\end{problem}%
$\mathhl{\pi = (\pi_k(\cdot))_{1:N-1}}$ \highlight{is the control policy, which calculates the input as} $\mathhl{u_k = \pi_k(x_0, \Omega)}$\highlight{, with $\Omega$ being the set of parameters of $\pi$. $\Pi$ is the set of all admissible policies.} 
$\mathcal{L}$ is the cost function, defined below. $\mathhl{\Sigma_f \succ 0.}$
$\Delta, \Gamma$ represent the allowed probability of violation for each constraint and satisfy $0 \leq \Delta, \Gamma \leq 0.5$. This is a reasonable assumption since most chance-constrained problems allow a risk smaller than 0.5. 
In this work, we address two types of cost functions, 
\begin{numcases}{\mathcal{L} = }
    \sum\nolimits_{k=0}^{N-1}   x_k^\top Q_k x_k + u_k^\top R_k u_k \ (Q_k \succeq 0, R_k \succ 0) \label{eq:lqr}\\
     \sum\nolimits_{k=0}^{N-1} \norm{u_k}  \label{eq:minfuel}
\end{numcases}
respectively termed the quadratic cost and the 2-norm cost.

Consider a concatenated formulation \cite{goldshtein_finite-horizon_2017} such that
\begin{equation} \nonumber
    \mathhl{
    X {=} \smqty[x_0 \\ x_1 \\ \vdots ],\  
    U {=} \smqty[u_0 \\ u_1 \\ \vdots], \ 
    }
    A {=} \smqty[I \\ A_0 \\ A_1 A_0 \\ \vdots], \  
    B {=} \smqty[
        0 & 0 & \\
        B_0 & 0 & \\
        A_1 B_0 & B_1 & \\
        \vdots & & \ddots
    ]
\end{equation}
Then, the state process can be written as $X = A x_0 + B U $. 
Define $E_k$ to be the sparse matrix such that $x_k = E_k X$.



%
\section{GM Steering without Chance Constraints} \label{sec:steering}
We choose the affine feedback control policy
\begin{equation} \label{eq:policy}
    u_{\mathhl{k,i}}(x_0) = v_k + L_k^i (x_0 - \mu_0^g)
\end{equation}
where $\mu_0^g \triangleq \E{x_0} = \sum_{i=1}^{K} \alpha_i \mu_0^i$, and \highlight{the feedback gains of the control sequence are determined by sampling from the discrete distribution}
\begin{equation}
    \mathhl{\P{L_k = L_k^i, \forall k} = \lambda_i(x_0),}
\end{equation}
\begin{equation}
    \text{\highlight{where}} \quad \lambda_i(x_0) = \frac{\alpha_i f_{\normal}(x_0; \mu_0^i, \Sigma_0^i)}{\sum_{i=1}^{K} \alpha_i f_{\normal}(x_0; \mu_0^i, \Sigma_0^i)}
\end{equation}
This control policy is inspired from \cite{balci_density_2023}; however, a key difference is that although the feedback term is chosen probabilistically, the feedforward trajectory is \highlight{deterministic and} common to all possible control policies. \highlight{In \cref{sec:discussion}, we elaborate on the difference between our work and \cite{balci_density_2023}.}
Define $L^i = [(L_0^i)^\top, \cdots, (L_{N-1}^i)^\top]^\top, V = [v_0^\top, \cdots, v_{N-1}^\top]^\top, R = \blkdiag{R_0, \cdots, R_{N-1}}, Q = \blkdiag{Q_0, \cdots, Q_{N-1}}$.

\subsection{State distribution}
First, we derive the distribution of the state under the proposed policy. 
We note two important propositions involving CFs that assist us in the proof. 
\begin{proposition} \label{prop:CF_normal}
    Let $X \sim \normal(\mu, \Sigma)$. Its CF is
    \begin{equation}
        \phi_X(t) = \mathhl{\exp}\big(\mathhl{\mathbb{i}} t^\top \mu - \frac{1}{2}t^\top \Sigma t)
    \end{equation}
\end{proposition}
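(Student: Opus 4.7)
The plan is to prove the proposition by first reducing to the zero-mean, identity-covariance case via an affine change of variables, then evaluating the characteristic function (CF) of a standard normal by completing the square.

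First, I would write $X = \mu + \Sigma^{1/2} Y$ where $Y \sim \normal(0, I)$ (this is well-defined since $\Sigma \succeq 0$ admits the square root $\Sigma^{1/2}$ introduced in the notation). Using linearity of the inner product and the definition $\phi_X(t) = \E{e^{i t^\top X}}$, I would factor
\begin{equation}
\phi_X(t) = e^{i t^\top \mu}\, \E{e^{i t^\top \Sigma^{1/2} Y}} = e^{i t^\top \mu}\, \phi_Y(\Sigma^{1/2\,\top} t).
\end{equation}
This reduces the claim to showing $\phi_Y(s) = e^{-\frac{1}{2}\|s\|^2}$ for $Y \sim \normal(0, I)$ and $s = \Sigma^{1/2\,\top} t$, since then $\|s\|^2 = s^\top s = t^\top \Sigma t$ yields the stated formula.

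Next, because the components of $Y$ are i.i.d.\ standard normal, $\phi_Y(s) = \prod_{j=1}^{n_x} \phi_{Y_j}(s_j)$, so it suffices to prove the scalar case: $\phi_{Y_j}(s_j) = e^{-s_j^2/2}$. I would write
\begin{equation}
\phi_{Y_j}(s_j) = \frac{1}{\sqrt{2\pi}} \int_{-\infty}^{\infty} e^{i s_j y - \frac{1}{2} y^2}\, dy
\end{equation}
and complete the square in the exponent as $i s_j y - \tfrac{1}{2} y^2 = -\tfrac{1}{2}(y - i s_j)^2 - \tfrac{1}{2} s_j^2$, so that
\begin{equation}
\phi_{Y_j}(s_j) = e^{-\frac{1}{2} s_j^2} \cdot \frac{1}{\sqrt{2\pi}} \int_{-\infty}^{\infty} e^{-\frac{1}{2}(y - i s_j)^2}\, dy.
\end{equation}
The remaining integral equals $\sqrt{2\pi}$, yielding the result.

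The main obstacle is justifying that the shifted Gaussian integral $\int_{-\infty}^{\infty} e^{-\frac{1}{2}(y - i s_j)^2} dy = \sqrt{2\pi}$ despite the complex shift. I would address this with a standard Cauchy-style contour argument: the integrand is entire in $y$, and integrating around a rectangle with vertices $\pm R$ and $\pm R - i s_j$ and letting $R \to \infty$ shows the vertical edges vanish, so the shifted integral equals the real Gaussian integral $\sqrt{2\pi}$. Alternatively, one can invoke analytic continuation of the moment generating function $M_{Y_j}(z) = e^{z^2/2}$ (well-defined and analytic for all $z \in \mathbb{C}$) evaluated at $z = i s_j$. Either justification is brief and standard, so the remainder of the proof is routine algebraic combination of the three displayed equations.
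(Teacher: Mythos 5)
Your proof is correct: the reduction $X=\mu+\Sigma^{1/2}Y$, factorization into scalar CFs, completing the square, and justifying the complex shift by a rectangular contour (or analytic continuation of $M_{Y_j}(z)=e^{z^2/2}$) is exactly the standard textbook derivation. The paper itself gives no argument here --- it simply remarks that the result ``can be found in most probability textbooks'' --- so your write-up supplies precisely the proof being referenced, with the only genuinely nontrivial point (the shifted Gaussian integral) handled correctly.
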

\begin{proof}
    This can be found in most probability textbooks.
\end{proof}

\begin{proposition} \label{prop:implicit_normal_dist}
    Let $X \in \R^p, X \sim \normal(\mu, \Sigma)$. Then,
    \begin{equation}
        \int_{\R^p} f_X(x)\  \delta (y - (Dx+d)) \ \diff x = f_Y(y) \label{eq:prop1}
    \end{equation}
    where  $D\in\R^{q\times p}, d \in \R^q, Y \sim \normal(D\mu+d, D\Sigma D^\top)$.
\end{proposition}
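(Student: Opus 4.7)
The plan is to prove the identity by computing the characteristic function of both sides and invoking uniqueness of the Fourier transform. The left-hand side, viewed as a function of $y \in \R^q$, is the pushforward density of $X$ under the affine map $x \mapsto Dx + d$ written formally through a Dirac delta; rather than interpret this pointwise (which is problematic when $D$ is not square or not full row rank), I would show that it agrees with $f_Y$ as tempered distributions by matching characteristic functions, which is a linear test against the exponentials $e^{it^\top y}$.

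First, I would compute the CF of the right-hand side. Since $Y \sim \normal(D\mu+d, D\Sigma D^\top)$, \cref{prop:CF_normal} yields immediately
\begin{equation*}
    \phi_Y(t) = \exp\!\left(i t^\top (D\mu+d) - \tfrac{1}{2} t^\top D \Sigma D^\top t\right).
\end{equation*}
Next, for the left-hand side, I would apply Fubini's theorem (justified because $f_X \geq 0$ and integrates to 1) to swap the order of integration,
\begin{equation*}
    \int_{\R^q} e^{it^\top y} \int_{\R^p} f_X(x)\,\delta(y-(Dx+d))\, \diff x \, \diff y
    = \int_{\R^p} f_X(x) \, e^{it^\top(Dx+d)} \, \diff x,
\end{equation*}
where the inner delta integral collapses as $\int e^{it^\top y}\delta(y-(Dx+d))\,\diff y = e^{it^\top(Dx+d)}$. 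This equals $e^{it^\top d}\,\E{e^{i (D^\top t)^\top X}} = e^{it^\top d}\,\phi_X(D^\top t)$, which by \cref{prop:CF_normal} is precisely $\phi_Y(t)$.

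Finally, since both sides have the same characteristic function and $\phi_Y$ uniquely determines the measure it represents (Fourier uniqueness), the two densities agree as distributions on $\R^q$, and classically where $f_Y$ exists (i.e. when $D\Sigma D^\top \succ 0$). I do not expect any real obstacle here: the only delicate point is the formal handling of the Dirac delta when $D\Sigma D^\top$ may be singular, but routing the argument through characteristic functions avoids ever having to evaluate the delta pointwise and keeps the proof clean and essentially one short calculation.
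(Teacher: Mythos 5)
Your proposal is correct and follows essentially the same route as the paper: compute the characteristic function of the left-hand side by swapping the order of integration and using the sifting property of the Dirac delta, recognize it as $e^{it^\top d}\phi_X(D^\top t) = \phi_Y(t)$ via \cref{prop:CF_normal}, and conclude by uniqueness/inversion of characteristic functions. Your added remark about the degenerate case $D\Sigma D^\top \not\succ 0$ is a sensible clarification but does not change the argument.
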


\begin{proof}
    Let $Z \in \R^q$ be the random variable that has the PDF of the LHS of \cref{eq:prop1}. The CF of $Z$ is defined by $\phi_Z(t) = \E{\mathhl{\exp}\big(\mathhl{\mathbb{i}}t^\top Z)}$
    where $t \in \R^q$ is a deterministic vector. Then,
    \if\shortOrFull1 
    \begin{align} \nonumber
        \phi_Z(t) &= \int_{\R^q} \mathhl{\exp}\big(\mathhl{\mathbb{i}}t^\top z) \int_{\R^p} f_X(x) \delta (z - (Dx+d)) \ \diff x\diff z \nonumber\\
        &= \int_{\R^p} f_X(x) \int_{\R^q}  \mathhl{\exp}\big(\mathhl{\mathbb{i}}t^\top z) \delta (z - (Dx+d)) \ \diff z \diff x \nonumber\\
        &= \int_{\R^p} f_X(x) \mathhl{\exp}\big(\mathhl{\mathbb{i}}t^\top(Dx+d)) \ \diff x \nonumber\\
        &= \mathhl{\exp}\big(\mathhl{\mathbb{i}}t^\top d) \E{\mathhl{\exp}\big(\mathhl{\mathbb{i}}s^\top Z)} \nonumber \\
        &= \mathhl{\exp}\big(\mathhl{\mathbb{i}}t^\top d) \mathhl{\exp}\big(\mathhl{\mathbb{i}} s^\top \mu - \frac{1}{2}s^\top \Sigma s) \nonumber
    \end{align}
    \fi
    \if\shortOrFull2 
    \begin{align*}
        \phi_Z(t) &= \int_{\R^q} \mathhl{\exp}\big(\mathhl{\mathbb{i}}t^\top z) \int_{\R^p} f_X(x) \delta (z - (Dx+d)) \ \diff x\diff z \nonumber\\
        &= \int_{\R^p} f_X(x) \int_{\R^q}  \mathhl{\exp}\big(\mathhl{\mathbb{i}}t^\top z) \delta (z - (Dx+d)) \ \diff z \diff x \nonumber\\
        &= \int_{\R^p} f_X(x) \mathhl{\exp}\big(\mathhl{\mathbb{i}}t^\top(Dx+d)) \ \diff x \nonumber\\
        &= \mathhl{\exp}\big(\mathhl{\mathbb{i}}t^\top d) \E{\mathhl{\exp}\big(\mathhl{\mathbb{i}}s^\top Z)} \nonumber \\
        &= \mathhl{\exp}\big(\mathhl{\mathbb{i}}t^\top d) \mathhl{\exp}\big(\mathhl{\mathbb{i}} s^\top \mu - \frac{1}{2}s^\top \Sigma s) \nonumber \\
        &=\exp\big(\mathbb{i}t^\top d + \mathbb{i} t^\top D \mu - \frac{1}{2} t^\top D \Sigma D^\top t) 
    \end{align*}
    \fi    
    where $s = D^\top t$.
    The first equality is from the definition of expectation; the third equality is from the sifting property of $\delta(\cdot)$.
    \if\shortOrFull1
    \highlight{The final equality is from \cref{prop:CF_normal}.}
    Further manipulating, we get 
    \fi
    \if\shortOrFull2
    The fifth equality is from \cref{prop:CF_normal}.
    Finally, we have
    \fi
    \begin{align}
        \hspace{-8pt}\phi_Z(t) = \mathhl{\exp}(\mathhl{\mathbb{i}}t^\top(D\mu+d) - \frac{1}{2} t^\top D \Sigma D^\top t) = \phi_Y(t)
    \end{align}
    By the Inversion Theorem of CFs \cite{grimmett_probability_2020}, $Z$ and $Y$ have the same distribution function. 
\end{proof}

\begin{remark}
    \cref{prop:implicit_normal_dist} makes no assumptions about the invertibility of the matrix $D$. If we assume the invertibility of $D$, we can simply invert the expression inside the $\delta$ of \cref{eq:prop1} and use the sifting property of $\delta(\cdot)$ to obtain the same result. The method of proof shown here bypasses any restrictions on invertibility. For example, \cite[Proposition 3]{balci_density_2023} appears to have this assumption implicitly.
\end{remark}

\begin{proposition} \label{prop:x_k-dist}
    Under the proposed control policy, the state is GM-distributed throughout the time horizon, i.e. $x_k \sim \text{GMM} \mathhl{(\alpha_i, \mu_k^i, \Sigma_k^i)_{i=1:K}}$, where 
    \begin{align}
        \mu_k^i 
        &= E_k [A \mu_0^i + B V  + B L^i ( \mu_0^i - \mu_0^g) ] \label{eq:mean-traj}\\
        \Sigma_k^i 
        &= E_k (A + B L^i) \Sigma_0^i (A + B L^i)^\top E_k^\top
    \end{align}
\end{proposition}

\begin{proof}
Using the definition of conditional probability densities and marginal densities, 
\begin{equation}
\hspace{-5pt}
    f_{X}(\hat{X}) = \int \int \ 
    f_{X | \hat{x}_0, \hat{U}}(\hat{x})\ 
    f_{U | \hat{x}_0} (\hat{U}) 
    f_{x_0} (\hat{x}_0) \ 
    \diff \hat{U} \ 
    \diff \hat{x}_0
\end{equation}
where we have expressions for the following conditional densities: 
\begin{align}
    &\hspace{-5pt} f_{X | \hat{x}_0, \hat{U}}(\hat{X}) = \delta(\hat{X} - (A \hat{x}_0 + B \hat{U})) \\
    &\hspace{-5pt} f_{U | \hat{x}_0} (\hat{U}) = \sum\nolimits_{i=1}^{K} \lambda_i (\hat{x}_0) \delta ( \hat{U} - (L^i(\hat{x}_0 - \mu_0^g) + V))
\end{align}
The rest of the proof is similar to that of \cite[Proposition 3]{balci_density_2023}, and is therefore omitted. The proof utilizes the Dirac delta function $\delta(\cdot)$ to express discrete distributions over a continuous support \cite{chakraborty_applications_2008}, as well as \cref{prop:implicit_normal_dist}. 
\end{proof}

\subsection{Control distribution}
\begin{proposition} \label{prop:control-distribution}
Under the proposed policy, $u_k$ is also GM-distributed with $u_k \sim \mathrm{GMM} \mathhl{(\alpha_i, \mu_{u,k}^i, \Sigma_{u,k}^i)_{i=1:K}}$, where
\begin{align}
    \mu_{u,k}^i &= v_k + L_k^i (\mu_0^i - \mu_0^g), \quad 
    \Sigma_{u,k}^i = L_k^i \Sigma_0^i ({L_k^i})^\top
\end{align}
\end{proposition}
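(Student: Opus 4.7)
The plan is to mirror the proof structure used for the state distribution in \cref{prop:x_k-dist}, since the control at each time step is itself an affine function of $x_0$ once we condition on which element of the feedback ensemble is realized. The only novelty is that the distribution now involves $u_k$ directly rather than the concatenated state; no new analytical machinery is required beyond what \cref{prop:CF_normal,prop:implicit_normal_dist} already provide.

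Concretely, I would start by writing the marginal density of $u_k$ by integrating out $x_0$:
\begin{equation}
    f_{u_k}(\hat{u}) = \int f_{u_k|\hat{x}_0}(\hat{u}) f_{x_0}(\hat{x}_0)\, \diff \hat{x}_0.
\end{equation}
The conditional density $f_{u_k|\hat{x}_0}$ is the sum of Dirac deltas encoding the probabilistic selection of feedback gains,
\begin{equation}
    f_{u_k|\hat{x}_0}(\hat{u}) = \sum_{i=1}^{K} \lambda_i(\hat{x}_0)\, \delta\bigl(\hat{u} - (v_k + L_k^i(\hat{x}_0 - \mu_0^g))\bigr),
\end{equation}
and the initial density is the Gaussian mixture $f_{x_0}(\hat{x}_0) = \sum_{i=1}^{K} \alpha_i f_{\normal}(\hat{x}_0; \mu_0^i, \Sigma_0^i)$. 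The key algebraic simplification is that the product $\lambda_i(\hat{x}_0) f_{x_0}(\hat{x}_0)$ collapses exactly to $\alpha_i f_{\normal}(\hat{x}_0; \mu_0^i, \Sigma_0^i)$ by the definition of $\lambda_i$, so the integral decouples across kernels.

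Swapping the sum and the integral then yields
\begin{equation}
    f_{u_k}(\hat{u}) = \sum_{i=1}^{K} \alpha_i \int f_{\normal}(\hat{x}_0; \mu_0^i, \Sigma_0^i)\, \delta\bigl(\hat{u} - (L_k^i \hat{x}_0 + v_k - L_k^i \mu_0^g)\bigr) \diff \hat{x}_0,
\end{equation}
and each summand is in precisely the form of \cref{eq:prop1} with $D = L_k^i$ and $d = v_k - L_k^i \mu_0^g$. Applying \cref{prop:implicit_normal_dist} converts each inner integral into a Gaussian PDF with mean $v_k + L_k^i(\mu_0^i - \mu_0^g)$ and covariance $L_k^i \Sigma_0^i {L_k^i}^\top$, giving the claimed GMM representation.

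The main obstacle I anticipate is essentially cosmetic: \cref{prop:implicit_normal_dist} as stated requires $\Sigma$ inside the Gaussian argument to be positive definite, whereas $L_k^i \Sigma_0^i {L_k^i}^\top$ may be only positive semidefinite when $L_k^i$ is not full row rank. This is handled as in the proof of \cref{prop:x_k-dist} by noting that \cref{prop:implicit_normal_dist} was proved through characteristic functions without invertibility assumptions, so the degenerate Gaussian case is accommodated automatically. Beyond that, the verification is a direct substitution and poses no conceptual difficulty.
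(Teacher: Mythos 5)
Your proposal is correct and follows essentially the same route as the paper: the paper's proof likewise writes the marginal of the (concatenated) control as an integral of the Dirac-delta conditional density against $f_{x_0}$, uses the cancellation $\lambda_i(\hat{x}_0) f_{x_0}(\hat{x}_0) = \alpha_i f_{\normal}(\hat{x}_0;\mu_0^i,\Sigma_0^i)$, and applies \cref{prop:implicit_normal_dist} to each kernel. The only superficial difference is that the paper works with the stacked vector $U$ and reads off the $u_k$ blocks, whereas you work with $u_k$ directly.
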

\begin{proof}
Using the definition of conditional probability densities and marginal densities, the distribution of $U$ is
\begin{align*} 
    &f_U(\hat{U}) = \int f_{U|x_0 = \hat{x}_0} (\hat{U}) f_{x_0}(\hat{x}_0) \ \diff \hat{x}_0 \\
    & \hspace{-5pt} = \int f_{x_0}(\hat{x}_0) \sum\nolimits_{i=1}^{K} \lambda_i (\hat{x}_0) \delta ( \hat{U} - (L^i(\hat{x}_0 - \mu_0^g) + V)) \diff \hat{x}_0 \\
    &\hspace{-5pt} =\sum_{i=1}^{K} \alpha_i \int f_{\normal} (\hat{x}_0; \mu_0^i, \Sigma_0^i) \delta ( \hat{U} - (L^i(\hat{x}_0 - \mu_0^g) + V)) \diff \hat{x}_0 \\
    &\hspace{-5pt} = \sum\nolimits_{i=1}^{K} \alpha_i f_{\normal} (\hat{U}; V + L^i (\mu_0^i - \mu_0^g)  , L^i \Sigma_0^i (L^i)^\top)\tag*{\qedhere} 
\end{align*}%
\end{proof}%
Note, the mean of the control $u_k$ is \textit{not} the feedforward term $v_k$, but is shifted by $\sum_{i=1}^{K} \alpha_i L_k^i (\mu_0^i - \mu_0^g)$.

\subsection{Terminal constraint sufficient condition}
Next, we present a sufficient condition for \cref{eq:terminal-dist}. 
\begin{proposition} \label{prop:terminal}
The terminal constraint can be satisfied by the constraints
\begin{equation}
     \mu_N^i = \mu_f \quad \forall i, \quad \Sigma_f = \sum\nolimits_{i=1}^{K} \alpha_i \Sigma_N^i
\end{equation}
\end{proposition}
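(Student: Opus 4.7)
The plan is to reduce the terminal distributional constraint to a moment-matching condition and to exploit the GMM structure of $x_N$ established in \cref{prop:x_k-dist}. Since under the proposed policy $x_N \sim \mathrm{GMM}(\alpha_i, \mu_N^i, \Sigma_N^i)$, the constraint $x_N \sim \mathcal{N}(\mu_f, \Sigma_f)$ is to be interpreted in the standard covariance-steering sense as matching the first two moments of the mixture to those of the target Gaussian, and this is exactly what the argument below delivers.

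First, I would compute the overall mean of the mixture directly from the definition of a GMM,
\begin{equation*}
    \E{x_N} = \sum_{i=1}^K \alpha_i \mu_N^i,
\end{equation*}
and observe that imposing $\mu_N^i = \mu_f$ for every $i$, together with $\sum_i \alpha_i = 1$, immediately yields $\E{x_N} = \mu_f$.

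Second, I would invoke the law of total variance (equivalently, the standard covariance formula for a finite mixture), treating the kernel index as a latent categorical variable with probability $\alpha_i$, so that
\begin{equation*}
    \cov{x_N} = \sum_{i=1}^K \alpha_i \Sigma_N^i + \sum_{i=1}^K \alpha_i (\mu_N^i - \E{x_N})(\mu_N^i - \E{x_N})^\top.
\end{equation*}
Under $\mu_N^i = \mu_f$ for all $i$, the second summand vanishes, leaving $\cov{x_N} = \sum_{i=1}^K \alpha_i \Sigma_N^i$, which equals $\Sigma_f$ by the second imposed constraint.

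The main delicate point, which I expect to be the only real obstacle, is that these two moment conditions do not by themselves force the mixture to be literally Gaussian: a pointwise distributional equality to $\mathcal{N}(\mu_f, \Sigma_f)$ would further require all $\Sigma_N^i$ to coincide with $\Sigma_f$. The proposition is therefore best read as a sufficient condition in the moment-matching sense that is standard in the linear covariance steering literature and that is consistent with the SDP reformulation pursued in the remainder of the paper. I would flag this interpretation in a short remark immediately following the proof so that the relaxation is transparent to the reader.
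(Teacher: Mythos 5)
Your proof is correct and follows essentially the same route as the paper: both arguments reduce the terminal constraint to matching the first two moments of the terminal mixture to those of $\mathcal{N}(\mu_f,\Sigma_f)$, and your law-of-total-variance expression is exactly the mixture-covariance formula the paper quotes (in the equivalent form $\sum_i \alpha_i(\Sigma_i+\mu_i\mu_i^\top)-\mu^g{\mu^g}^\top$). Your closing caveat --- that moment matching does not make the mixture literally Gaussian unless all $\Sigma_N^i$ coincide, so the condition is ``sufficient'' only in the moment-matching sense standard in covariance steering --- is a legitimate observation about a looseness the paper itself glosses over, and is worth keeping.
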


\begin{proof}
A Gaussian distribution $\normal (\mu^g, \Sigma^g)$ which is expressed exactly with a Gaussian mixture model $\mathrm{GMM}\mathhl{(\alpha_i, \mu_i, \Sigma_i)_{i=1:K}}$ satisfies the following\cite{boone_non-gaussian_2022}: 
\begin{align}
    \hspace{-5pt}\mu^g = \sum_{i=1}^{K} \alpha_i \mu_i, \  \Sigma^g = \sum_{i=1}^{K} \alpha_i (\Sigma_i + \mu_i \mu_i^\top) - \mu^g (\mu^g)^\top
    \label{eq:GMM-Gaussian-cov}
\end{align}
Since each kernel belonging to the GM remains Gaussian under affine dynamics and control, a sufficient condition for the terminal constraint \cref{eq:terminal-dist} to be satisfied is $\mu_N^i = \mu_f \quad \forall i$. Substitute this into the second equation of \cref{eq:GMM-Gaussian-cov} to obtain
\begin{equation}
    \Sigma^g = \sum\nolimits_{i=1}^{K} \alpha_i \Sigma_i \tag*{\qedhere} 
\end{equation}
\end{proof}
We can relax the equality above with $\succeq$ for practical purposes \cite{bakolas_optimal_2016} so that the terminal state is concentrated within the target covariance ellipsoid.

\begin{proposition} \label{prop:terminal-numerical}
    The terminal constraint \cref{eq:terminal-dist} is implied by 
    \begin{align}
        &\mu_f = E_N [A \mu_0^i + B V + B L^{\mathhl{i}} ( \mu_0^i - \mu_0^g)] \quad \forall i \label{eq:terminal-mean-final} \\
        &\norm*{\Sigma_f^{-\frac{1}{2}} Y} \leq 1 \label{eq:terminal-cov-GMM-final}
    \end{align}
    where 
    \begin{align}
       &Y = [\sqrt{\alpha_1} \chol{\Sigma_N^{(1)}}, \sqrt{\alpha_2} \chol{\Sigma_N^{(2)}}, \cdots, \sqrt{\alpha_{K}} \chol{\Sigma_N^{(K)}} ] \label{eq:Y-def}\\
        &\chol{\Sigma_N^{i}} = E_N(A + B L^{\mathhl{i}}) \chol{\Sigma_0^i}
    \end{align}
\end{proposition}
\begin{proof}
From \cref{prop:terminal,prop:x_k-dist}, in order to satisfy the terminal constraint, we need \cref{eq:terminal-mean-final} and 
\begin{align}
    \Sigma_f &\succeq \sum\nolimits_{i=1}^{K} \alpha_i E_N(A + B L^{\mathhl{i}}) \Sigma_0^i (A + B L^{\mathhl{i}})^\top E_N^\top \label{eq:terminal-cov-GMM-ver2}
\end{align}
With a proof similar to the one used in \cite[Proposition 4]{okamoto_optimal_2018}, \cref{eq:terminal-cov-GMM-ver2} is equivalent to
\begin{equation} \label{eq:lambda_max<1}
    \maxeig{\Sigma_f^{-\frac{1}{2}} \left[\sum\nolimits_{i=1}^{K} \alpha_i \Sigma_N^{i} \right] (\Sigma_f^{-\frac{1}{2}})^{\top} } \leq 1
\end{equation}
Defining $Y$ in \cref{eq:Y-def}, this is equivalent to
\begin{equation}
    \maxeig{\Sigma_f^{-\frac{1}{2}} Y Y^{\top} (\Sigma_f^{-\frac{1}{2}})^\top} \leq 1 \ \Leftrightarrow\  \norm{\Sigma_f^{-\frac{1}{2}} Y}^2 \leq 1 \tag*{\qedhere} 
\end{equation}
\end{proof}

\subsection{Cost function}
Before formulating the cost function in terms of the decision variables, we note an important proposition to assist us in the proof. 
\begin{proposition} \label{prop:E-func}
    Let $\xi$ be a GM-distributed vector with $\xi \sim \mathrm{GMM}\mathhl{(\alpha_i, \mu_i, \Sigma_i)_{i=1:K}}$. Then, for \textit{any} function $H(\xi)$, 
    \begin{equation}
        \E{H(\xi)} = \sum\nolimits_{i=1}^{K} \alpha_i \E{H(\xi_i)}
    \end{equation}
    where $\xi_i \sim \normal(\mu_i, \Sigma_i)$. 
\end{proposition}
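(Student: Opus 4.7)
The plan is to prove this by a direct computation using the definition of expectation as an integral against the probability density function, exploiting the fact that the PDF of a Gaussian mixture is, by definition, a convex combination of Gaussian PDFs.

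First I would write the expectation in its integral form,
\begin{equation}
    \E{H(\xi)} = \int H(\hat{\xi})\, f_\xi(\hat{\xi}) \, \diff \hat{\xi},
\end{equation}
and then substitute the explicit Gaussian-mixture density
\begin{equation}
    f_\xi(\hat{\xi}) = \sum_{i=1}^{K} \alpha_i \, f_{\normal}(\hat{\xi}; \mu_i, \Sigma_i).
\end{equation}
Next, I would use linearity of the integral to exchange the finite sum and the integral, obtaining
\begin{equation}
    \E{H(\xi)} = \sum_{i=1}^{K} \alpha_i \int H(\hat{\xi})\, f_{\normal}(\hat{\xi}; \mu_i, \Sigma_i)\, \diff \hat{\xi},
\end{equation}
and finally recognize each inner integral as $\E{H(\xi_i)}$ with $\xi_i \sim \normal(\mu_i, \Sigma_i)$, which yields the claimed identity.

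There is no real obstacle here since the sum over $i$ is finite, so the swap of sum and integral is unconditional; the only hidden assumption is that $H(\xi)$ is such that the expectation is well-defined (e.g., $\E{|H(\xi_i)|}<\infty$ for each $i$), and I would mention this briefly as a regularity assumption on $H$. The statement's emphasis on \emph{any} function $H$ should therefore be read as: any measurable function for which the expectations on either side exist. This proposition is essentially a restatement of the fact that the law of $\xi$ is a convex combination of the Gaussian laws $\normal(\mu_i, \Sigma_i)$, and the argument above makes this explicit in one short calculation.
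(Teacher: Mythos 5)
Your proposal is correct: the paper itself gives no in-line argument (it simply cites a reference), and your direct computation---writing $\E{H(\xi)}$ as an integral against the mixture density and swapping the finite sum with the integral---is exactly the standard proof that the cited result rests on. Your remark that ``any'' $H$ should be read as any measurable $H$ for which the relevant expectations exist is an appropriate and worthwhile qualification that the paper's statement leaves implicit.
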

\begin{proof}
    See \cite{hu_chance_2022}. 
\end{proof}
\if\shortOrFull1 
\fi
\if\shortOrFull2 
Essentially, \cref{prop:E-func} states that the expectation of a function of a variable with a mixture distribution is equal to the expectation of the weighted sum of the expected values of the function with the individual distributions of the mixture as inputs. 
\fi
\begin{proposition} \label{prop:cost-deterministic}
The objective functions \cref{eq:lqr,eq:minfuel} can be equivalently expressed as
    
1) Quadratic cost
\vspace{-7pt}
\begin{multline}\label{eq:lqr-deterministic} 
    J(L^i,V) = \sum\nolimits_{i=1}^{K} \alpha_i \mathrm{tr}\{RL^i\Sigma_0^i (L^i)^\top 
    + Z^\top R Z\\
    + Q C \Sigma_0^i C^\top  
    + (A \mu_0^i + BZ)^\top Q (A \mu_0^i + BZ)\}
\end{multline}
where $Z \triangleq V + L^i (\mu_0^i - \mu_0^g),\ C \triangleq (A + B L^i)$.

2) 2-norm cost%
\vspace{-5pt}
\begin{equation}
    J(L^i,V) = \sum\nolimits_{k=0}^{N-1} \sum\nolimits_{i=1}^{K} \alpha_i \norm{v_k + L_k^i (\mu_0^i - \mu_0^g)} \label{eq:minfuel-deterministic}
\end{equation}
\end{proposition}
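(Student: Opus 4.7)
The plan is to use \cref{prop:E-func} as the main tool in both cases, reducing the expectation over GM-distributed variables to a weighted sum of expectations under each Gaussian kernel. Conditioned on the $i$-th kernel, $x_0 \sim \normal(\mu_0^i, \Sigma_0^i)$; substituting the policy gives $X = (A + BL^i)x_0 + BV - BL^i\mu_0^g$ and $U = V + L^i(x_0 - \mu_0^g)$, both affine in the Gaussian $x_0$ with means and covariances already recorded in \cref{prop:x_k-dist,prop:control-distribution}. In the notation of the proposition, $Z = V + L^i(\mu_0^i - \mu_0^g)$ and $C = A + BL^i$, so the conditional mean and covariance of $X$ are $A\mu_0^i + BZ$ and $C\Sigma_0^i C^\top$, and those of $U$ are $Z$ and $L^i\Sigma_0^i (L^i)^\top$.

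For the quadratic cost, I would apply the standard identity $\mathbb{E}[\xi^\top M\xi] = \mathrm{tr}(M\,\mathrm{Cov}[\xi]) + \mathbb{E}[\xi]^\top M\,\mathbb{E}[\xi]$ to both $X^\top Q X$ and $U^\top R U$ under the $i$-th kernel distribution, substitute the conditional means and covariances above, weight by $\alpha_i$, and sum over $i$. Using linearity of trace and the fact that scalars coincide with their own trace, the four resulting terms collect under a single $\mathrm{tr}\{\cdot\}$, producing exactly \cref{eq:lqr-deterministic}. The one piece of bookkeeping is to expand the shift $-BL^i\mu_0^g$ correctly so that it recombines with $BV$ into $BZ$; this is algebraic and is arguably the most tedious step.

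For the 2-norm cost, \cref{prop:E-func} gives $\mathbb{E}[\norm{u_k}_2] = \sum_i \alpha_i \mathbb{E}_i[\norm{u_k^i}_2]$ with $u_k^i \sim \normal(\mu_{u,k}^i, \Sigma_{u,k}^i)$. The stated $\sum_i \alpha_i \norm{v_k + L_k^i(\mu_0^i - \mu_0^g)}_2$ is exactly $\sum_i \alpha_i \norm{\mathbb{E}_i[u_k^i]}_2$, i.e., the weighted norms of the kernel means. By Jensen's inequality applied to the convex norm, this is a lower bound on the true weighted sum, with equality when $L_k^i\Sigma_0^i(L_k^i)^\top = 0$. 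The main conceptual obstacle is therefore clarifying the sense of ``equivalent'' in \cref{eq:minfuel-deterministic}: literal equality requires an additional assumption (e.g., vanishing within-kernel control variance or $L_k^i = 0$), so the proposition is most naturally read as replacing the expected 2-norm by its convex mean-based surrogate, which is what preserves SDP compatibility.
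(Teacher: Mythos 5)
Your proof is correct and follows the same route as the paper, which itself only states that the result follows from ``standard algebraic manipulations'' using \cref{prop:E-func} together with the kernel means and covariances from \cref{prop:x_k-dist,prop:control-distribution}; your quadratic-cost derivation via $\E{\xi^\top M \xi} = \mathrm{tr}(M\,\mathrm{Cov}[\xi]) + \E{\xi}^\top M \E{\xi}$ reproduces \cref{eq:lqr-deterministic} exactly. Your observation on the 2-norm case is also well taken and is a genuine subtlety the paper glosses over: since $\E{\norm{u_k^i}_2} \geq \norm{\E{u_k^i}}_2$ with equality only when $L_k^i \Sigma_0^i (L_k^i)^\top = 0$, \cref{eq:minfuel-deterministic} is strictly a Jensen lower-bound surrogate for \cref{eq:minfuel} rather than an equivalent expression, which is the standard convention in the covariance-steering literature but not what ``equivalently expressed'' literally asserts.
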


\begin{proof}
    We can apply standard algebraic manipulations to show this result based on \cref{prop:E-func} and the expression for the distribution of the state and control. 
\end{proof}

\begin{theorem} \label{thm:1}
    Without chance constraints \cref{eq:CC-original}, \cref{pr:1} can be converted to a convex optimization problem.
\end{theorem}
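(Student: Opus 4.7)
The plan is to collect the results already established in \cref{prop:x_k-dist,prop:control-distribution} and in the terminal-constraint and cost propositions, and verify convexity or affineness of each piece of the reformulated problem in the decision variables $\{L^i\}_{i=1}^K$ and $V$. Under the proposed control policy, the dynamics, distributional constraints, and objective have all been rewritten as deterministic expressions in these variables, so the remaining task is essentially a convexity audit of each component, after which standard convex programming results close the argument.

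For the objective, take the quadratic case \cref{eq:lqr-deterministic} first. The auxiliary quantities $Z_i \triangleq V + L^i(\mu_0^i - \mu_0^g)$ and $C_i \triangleq A + B L^i$ are jointly affine in $(V, L^i)$. Each of the four summands in the trace is then a positive (semi)definite quadratic form composed with an affine map: $\mathrm{tr}(R L^i \Sigma_0^i {L^i}^\top) = \| R^{1/2} L^i (\Sigma_0^i)^{1/2} \|_F^2$ is a squared Frobenius norm, $Z_i^\top R Z_i$ and $(A \mu_0^i + B Z_i)^\top Q (A \mu_0^i + B Z_i)$ are convex quadratics in $Z_i$, and $\mathrm{tr}(Q C_i \Sigma_0^i C_i^\top)$ is analogous to the first term. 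Summing with nonnegative weights $\alpha_i$ preserves convexity. The 2-norm cost \cref{eq:minfuel-deterministic} is already a nonnegative sum of norms evaluated on affine functions of $(V, L^i)$, hence convex as well. In both cases, an epigraphic reformulation plus a Schur-complement trick gives an SDP-representable objective.

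For the constraints, the terminal mean condition \cref{eq:terminal-mean-final} is affine in $(V, L^i)$ and thus contributes linear equality constraints. The terminal covariance condition \cref{eq:terminal-cov-GMM-final} is the spectral-norm inequality $\| \Sigma_f^{-1/2} Y \|_2 \leq 1$; the blocks of $Y$, namely $\sqrt{\alpha_i}\, E_N (A + B L^i)(\Sigma_0^i)^{1/2}$, are each affine in $L^i$, so $\Sigma_f^{-1/2} Y$ is an affine matrix function of the decision variables. Since the spectral norm is convex and convexity is preserved under affine precomposition, the constraint is convex; equivalently, a Schur complement converts it into a linear matrix inequality, lining up with the SDP framework promised in the abstract.

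The only real subtlety is ensuring that the quadratic cost is jointly convex in $V$ and every $L^i$ simultaneously, rather than merely separately convex in each block, since otherwise the convex-program claim would be unsound. This is handled by observing that each $Z_i$ is jointly affine in $(V, L^i)$ and that the outer trace/Frobenius-norm composition is itself convex, so the standard rule that a convex function composed with an affine map remains convex closes the argument. No substantive computational obstacle remains.
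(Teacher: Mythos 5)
Your proposal is correct and follows the same route as the paper: the paper's proof simply asserts that the terminal constraints \cref{eq:terminal-mean-final,eq:terminal-cov-GMM-final} and the cost \cref{eq:lqr-deterministic} or \cref{eq:minfuel-deterministic} are convex in the decision variables, and you have supplied the explicit convexity audit (affine precomposition, sums of convex quadratics and norms, spectral-norm constraint) that justifies that assertion.
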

\begin{proof}
    \highlight{The terminal constraints involve only linear \cref{eq:terminal-mean-final} and norm \cref{eq:terminal-cov-GMM-final} operations on the variables, and the cost function is quadratic \cref{eq:lqr-deterministic} or a norm \cref{eq:minfuel-deterministic} of the variables.}
\end{proof}

\section{Chance Constraints} \label{sec:CCs}
In this section, we consider the deterministic reformulation of the chance constraints \cref{eq:CC-original} to show that under a fixed risk allocation, they can be formulated as deterministic convex constraints. 
\subsection{Deterministic Formulation}
We consider two types of chance constraints for a GM-distributed variable $y_k \in \R^{n_y}, y_k \sim \mathrm{GMM}\mathhl{(\alpha_i, \mu_i, \Sigma_i)_{i=1:K}}$, which can be applied to either state or control: 
\begin{equation}
    \P{ \bigwedge\nolimits_{j=1}^{N_c} \bigwedge\nolimits_{k=1}^{N} y_k \in S_j} \geq 1 - \Delta \label{eq:original-chance}
\end{equation}
where $S_j$ is a halfplane constraint, defined such that
\begin{equation}
    S_j(y_k) \triangleq \{y_k: a_j^\top y_k - b_j \leq 0\} 
\end{equation}
for $j = 1,\cdots, N_c$. We also consider the 2-norm constraint: 
\begin{equation}
    \P{\bigwedge\nolimits_{k=0}^{N-1} \norm{G y_k + g} \leq y_{\max}} \geq 1 - \Gamma \label{eq:control-norm}
\end{equation}

\begin{theorem} \label{thm:CC-deterministic}
    The chance constraints \cref{eq:original-chance,eq:control-norm} can be conservatively approximated in a deterministic form as:
    \begin{subequations} 
    \begin{align} 
    & \hspace{-5pt} a_j^\top \mu_k^i + \mathhl{F_{\normal}^{-1}}(1 - \delta_{ijk}) \norm{a_j^\top \chol{\Sigma_k^i}} \leq b_j  \quad \forall (i,j,k)  \label{eq:CC_deterministic-affine}\\
      &\hspace{-5pt}
      \sum\nolimits_{i=1}^{K}
      \sum\nolimits_{j=1}^{N_c} \sum\nolimits_{k=1}^N\alpha_i \delta_{ijk} \leq \Delta \label{eq:delta_ijk_sum} \\
    & \hspace{-5pt} \norm{G\mu_{k}^i+g} {+}  \sqrt{\mathhl{F_{\chi_{n_y}^2}^{-1}}(1 -\gamma_{ik})} \norm{G\chol{\Sigma_{k}^i}} \leq y_{\max}, \forall (i,k) \label{eq:control-norm-final}\\
    &\hspace{-5pt}
    \sum\nolimits_{i=1}^{K}
    \sum\nolimits_{k=0}^{N-1}\alpha_i \gamma_{ik} \leq \Gamma \label{eq:gamma_ik_sum}
    \end{align}
    \end{subequations}
    where $\mathhl{F_{\normal}^{-1}}(\cdot)$ is the inverse cumulative distribution function (icdf) for a standard normal distribution, and $\mathhl{F_{\chi_{n_y}^2}^{-1}}(\cdot)$ is the icdf for a chi-squared distribution with $n_y$ degrees of freedom.
$\delta_{ijk}$ and $\gamma_{ik}$ are additional variables that indicate the risk allocated to each decomposed constraint. 
\end{theorem}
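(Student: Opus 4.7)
The plan is to handle the affine constraints \eqref{eq:original-chance} and the 2-norm constraints \eqref{eq:control-norm} in parallel, each via a two-level risk-allocation decomposition: first across time/half-planes using the union bound, then across Gaussian kernels using the law of total probability applied to the mixture. The conservative deterministic reformulation then follows from standard icdf-based sufficient conditions on the resulting per-kernel Gaussian tail probabilities.

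For the affine case, I would start from the complementary event and apply Boole's inequality:
\begin{equation*}
    \P{\bigvee_{j,k} a_j^\top y_k - b_j > 0} \leq \sum_{j,k} \P{a_j^\top y_k > b_j}.
\end{equation*}
Since $y_k \sim \mathrm{GMM}(\alpha_i, \mu_k^i, \Sigma_k^i)$, the law of total probability gives $\P{a_j^\top y_k > b_j} = \sum_i \alpha_i \P{a_j^\top y_k^i > b_j}$ with $y_k^i \sim \mathcal{N}(\mu_k^i, \Sigma_k^i)$; this is the content of \cref{prop:E-func} applied to the indicator function. Introducing the per-kernel risk variables $\delta_{ijk}$, it then suffices to impose $\P{a_j^\top y_k^i > b_j} \leq \delta_{ijk}$ together with the budget \eqref{eq:delta_ijk_sum}. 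Because $a_j^\top y_k^i$ is scalar Gaussian with variance $a_j^\top \Sigma_k^i a_j = \lVert a_j^\top (\Sigma_k^i)^{1/2} \rVert^2$, inverting the standard normal CDF yields exactly \eqref{eq:CC_deterministic-affine}.

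For the 2-norm case the same two decompositions reduce the problem to bounding $\P{\lVert G y_k^i + g \rVert_2 > y_{\max}}$ for a single Gaussian kernel. This is the main obstacle, since the norm of a Gaussian vector is not itself Gaussian. The idea is to write $G y_k^i + g = (G\mu_k^i + g) + G(\Sigma_k^i)^{1/2} w$ with $w \sim \mathcal{N}(0,I_{n_y})$, and then apply the triangle inequality followed by sub-multiplicativity of the spectral norm:
\begin{equation*}
    \lVert G y_k^i + g \rVert \leq \lVert G\mu_k^i + g \rVert + \lVert G(\Sigma_k^i)^{1/2} \rVert \, \lVert w \rVert.
\end{equation*}
Since $\lVert w \rVert^2 \sim \chi^2_{n_y}$, the event $\lVert w \rVert^2 \leq Q_{\chi^2_{n_y}}(1-\gamma_{ik})$ has probability at least $1-\gamma_{ik}$, so requiring $\lVert G\mu_k^i + g \rVert + \sqrt{Q_{\chi^2_{n_y}}(1-\gamma_{ik})} \, \lVert G(\Sigma_k^i)^{1/2} \rVert \leq y_{\max}$ is a deterministic sufficient condition for $\P{\lVert G y_k^i + g \rVert > y_{\max}} \leq \gamma_{ik}$, which is \eqref{eq:control-norm-final}. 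Pairing this with the kernel/time budget \eqref{eq:gamma_ik_sum} closes the argument.

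I expect the only subtlety to be the double conservativeness in the 2-norm derivation (triangle inequality plus spectral-norm sub-multiplicativity), which should be flagged as the source of slack that motivates the iterative risk allocation scheme mentioned in the contributions. Beyond that, the argument is a direct chain: union bound $\to$ mixture decomposition via \cref{prop:E-func} $\to$ scalar-Gaussian or chi-squared icdf bound, and convexity of the resulting constraints in $(L^i, V, \delta_{ijk}, \gamma_{ik})$ for fixed risk allocation follows because $\mu_k^i$ is affine in $(L^i, V)$ and $(\Sigma_k^i)^{1/2}$ is affine in $L^i$ by \cref{prop:x_k-dist}.
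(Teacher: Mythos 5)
Your proposal is correct and follows essentially the same route as the paper: Boole's inequality across constraints and time steps, an exact (not conservative) decomposition across the Gaussian kernels via the mixture identity of \cref{prop:E-func}, the scalar-Gaussian icdf inversion for the half-plane constraints, and a triangle-inequality plus chi-squared-quantile bound for the 2-norm constraint. The only cosmetic difference is that you derive the per-kernel 2-norm bound explicitly (sub-multiplicativity and the event $\norm{w}^2 \leq Q_{\chi^2_{n_y}}(1-\gamma_{ik})$) where the paper simply cites the triangle/Boole argument of \cite{oguri_chance-constrained_2024}, which amounts to the same standard bound.
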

\begin{proof}
    Let $y_k^i$ be the normal-distributed vector such that $y_k^i \sim \normal(\mu_k^i, \Sigma_k^i)$. 
    Using Boole's inequality and introducing the decision variables $\delta_{jk}$, \cref{eq:original-chance} can be \textit{conservatively approximated as} \cite{blackmore_convex_2009}
    \begin{subequations} \label{eq:CC-after-Boole}
    \begin{align}
        &\P{y_k \in S_j} \geq 1 - \delta_{jk} \quad \forall (j,k) \label{eq:delta_jk}\\
        &\sum\nolimits_{j=1}^{N_c} \sum\nolimits_{k=1}^N\delta_{jk} \leq \Delta
    \end{align}
        \end{subequations}
    Now, introducing the decision variables $\delta_{ijk}$, \cref{eq:CC-after-Boole} can be \textit{equivalently} expressed as \cite{hu_chance_2022}
    \begin{subequations}\label{eq:CC-after-Boole-decomposed}
    \begin{align} 
        &\P{y_k^i \in S_j} \geq 1 - \delta_{ijk} \quad \forall(i,j,k) \label{eq:normal-constraint}\\
        &\sum\nolimits_{i=1}^{K} \alpha_i \delta_{ijk} \leq \delta_{jk} \quad \forall(j,k) \label{eq:delta_ijk}
    \end{align}
    \end{subequations}
Since $S_j$ is a hyperplane constraint, \cref{eq:normal-constraint} can be expressed as \cref{eq:CC_deterministic-affine} \cite{blackmore_convex_2009}. 
Combining \cref{eq:delta_jk,eq:delta_ijk}, we get \cref{eq:delta_ijk_sum}.

Similarly, for the 2-norm constraint, \cref{eq:original-chance} is conservatively approximated by introducing the decision variables $\gamma_{ik}$. Note that deriving \cref{eq:CC-after-Boole-decomposed} from \cref{eq:CC-after-Boole} is a general property that applies regardless of the constraint or mixture type. Whence,
    \begin{align}
        &\P{\norm{G y_k^i + g} \leq y_{\max}} \geq 1 - \gamma_{ik} \quad \forall(i,k) \label{eq:control-norm-ik} \\
        & \sum\nolimits_{i=1}^{K} \sum\nolimits_{k=0}^{N-1}\alpha_i \gamma_{ik} \leq \Gamma \label{eq:gamma_ik}
    \end{align}
    Using the triangle inequality and Boole's inequality \cite{oguri_chance-constrained_2024}, \cref{eq:control-norm-ik} is conservatively approximated by \cref{eq:control-norm-final}.
\end{proof}

\begin{remark}
  Contrary to the statement in \cite[p.3594]{boone_non-gaussian_2022}, the decomposition of the constraint on the mixture does not require Boole's inequality; it is equivalent since it is merely an introduction of additional decision variables. Boole's inequality is only used when considering risk allocation between chance constraints or nodes.
\end{remark}

\begin{remark}
The result of \cref{thm:CC-deterministic} is convex in the variables $V, L^i$ if we fix the risk variables $\delta_{ijk},\gamma_{ik}$. A simple solution is uniform risk allocation \highlight{(URA)}, which is easily derived as $\delta_{ijk} = \frac{\Delta}{N_c \cdot N} \forall(i,j,k)$, and $\gamma_{ik} = \frac{\Gamma}{N} \forall(i,k)$. One can verify that they satisfy \cref{eq:delta_ijk_sum,eq:gamma_ik_sum} with equality. 
\end{remark}

For the numerical examples in this work, we consider hyperplane state constraints and 2-norm control input constraints.
By substituting the expressions for $x_k, u_k$, these can be written in terms of the decision variables as: 
\begin{subequations}
\begin{align} \label{eq:halfplane-state}
&a_{j}^\top E_k [A \mu_0^i + B V + B L^i ( \mu_0^i - \mu_0^g)] +
\\
&\mathhl{F_{\normal}^{-1} (1 - \delta_{ijk})} \norm*{a_{j}^\top E_k(A + B L^i) \mathhl{(\Sigma_0^i)^{\frac{1}{2}}} }
  - b_{j} \leq 0, \forall (i,j,k) 
\nonumber
\\
\begin{split} \label{eq:2-norm-control}
& \norm*{v_k + L_k^i (\mu_0 - \mu_0^g)}
\\
& + \sqrt{\mathhl{F_{\chi_{n_u}^2}^{-1}}(1 -\gamma_{ik})} \norm*{L_k^i (\Sigma_0^i)^{\frac{1}{2}}}   \leq  u_{\max} , \forall (i,k)
\end{split}
\end{align}
\end{subequations}
combined with \cref{eq:delta_ijk_sum,eq:gamma_ik_sum}. 

Combining \cref{thm:1,thm:CC-deterministic}, with a known risk allocation, we have a deterministic convex optimization problem to solve \cref{pr:1} under chance constraints:
\begin{problem} \label{pr:final}
    Given the risk variables $\delta_{ijk}, \gamma_{ik}$ that satisfy \cref{eq:delta_ijk_sum,eq:gamma_ik_sum}, minimize \cref{eq:lqr-deterministic} (quadratic) or \cref{eq:minfuel-deterministic} (2-norm) subject to the terminal constraints \cref{eq:terminal-mean-final,eq:terminal-cov-GMM-final} and chance constraints \cref{eq:halfplane-state,eq:2-norm-control} \highlight{with respect to the variables $(u_k)_{k=0:N-1}$, $(L^i)_{i=1:K}$.}
\end{problem}
\subsection{Risk Allocation}%
\if\shortOrFull1 
To enhance the optimality of the solution from solving \cref{pr:final}, an iterative risk allocation (IRA) algorithm \cite{ono_iterative_2008} is a common choice.
Inspired by \cite{ono_iterative_2008,boone_non-gaussian_2022}, we develop a new IRA algorithm that simultaneously accounts for affine and 2-norm constraints and allocates the risk between nodes, constraints, and kernels.
At each iteration, \cref{pr:final} is solved. Then, for inactive constraints, their corresponding risk variables are decreased. The affine and 2-norm constraint risks are respectively updated as $\delta_{ijk} {\gets} \beta \delta_{ijk} + (1{-}\beta)
   (1 - \Phi_{\normal} \left\{ (b_j - a_j^\top \mu_k^i) /\sqrt{a_j^\top \Sigma_k^i a_j} \right\}) $, and $\gamma_{ik} \gets \beta \gamma_{ik} + (1{-}\beta)
    (1 - \Phi_{\chi^2}\{  (y_{\max} - \norm{\mu_{k}^i} )^2/\|\chol{\Sigma_{k}^i}\|^2 \}  )$, 
where $\beta$ is an algorithm parameter such that $0{<}\beta{<}1$. By inverting \cref{eq:CC_deterministic-affine,eq:control-norm-final}, one can verify that the risk will always be decreased for inactive constraints. The residual risks that occur from this risk reduction are calculated by
$\delta_{\mathrm{res}} {\gets} \Delta{-}\sum_{i=1}^{K} \sum_{j=1}^{N_c} \sum_{k=1}^N\alpha_i\delta_{ijk},    \gamma_{\mathrm{res}} \gets \Gamma {-} \sum_{i=1}^{K} \sum_{k=1}^N\alpha_i \gamma_{ik}
$. The residual risks are allocated to the active constraints. The allocation scheme differs depending on whether all Gaussian kernels have active constraints. For example, for affine constraints, if all the kernels are active, the update is simply $\delta_{ijk} \gets \delta_{ijk} +\delta_{\mathrm{res}} /N_{\mathrm{active,total}}^x$, where $N_{\mathrm{active,total}}^x$ is the total number of active hyperplane constraints. If there are inactive kernels, $\delta_{ijk} \gets \delta_{ijk} + \delta_{\mathrm{res}} / (\alpha_i\cdot N_{\mathrm{active},i}^x \cdot M_{\mathrm{active}}^x)$, where $M_{\mathrm{active}}^x$ is the number of active kernels and $N_{\mathrm{active},i}^x$ is the number of active constraints for kernel $i$. Intuitively, the residual risk is first distributed among all kernels by dividing by $M_{\mathrm{active}}^x$, then, for each kernel, we evenly distribute this by dividing by $\alpha_i\cdot N_{\mathrm{active},i}^x$. 
The update is similar for 2-norm constraints. This process is repeated until the improvement in cost is less than a tolerance $\epsilon$.  
The complete pseudocode is in \cite{kumagai_chance-constrained_2024}.

\fi
\if\shortOrFull2 
To enhance the optimality of the solution from solving \cref{pr:final}, an iterative risk allocation (IRA) algorithm \cite{ono_iterative_2008} is a common choice. Inspired by \cite{ono_iterative_2008,boone_non-gaussian_2022}, we develop a new IRA algorithm that simultaneously accounts for affine and 2-norm constraints, as well as allocation between nodes, constraints, and kernels. 
The pseudocode for the algorithm is shown in \cref{alg:ira}. $\Phi_{\normal}, \Phi_{\chi^2}$ represent the cumulative distribution function of the standard normal and chi-squared inverse distributions. $\indicator$ is the indicator function. The key enhancement to \cite{boone_non-gaussian_2022} is the consideration of 2-norm chance constraints and the difference in the risk update algorithm, which is a result of considering the \textit{weighted risk}, as termed in \cite{boone_non-gaussian_2022}.
By considering the \textit{weighted risk} instead of the \textit{unweighted risk}, the update procedure does not require any additional constraints on the risk variables. Since \cite{boone_non-gaussian_2022} considered the \textit{unweighted risk}, at each step of the algorithm, the risks must be updated while ensuring that they are not smaller than the corresponding mixture weights $\alpha_i$. 

Lines 10 and 11 count the number of kernels that have at least one active constraint for each type of constraint. 
Lines 12-15 decrease the risk allocated to the inactive constraints. By inverting \cref{eq:CC_deterministic-affine,eq:control-norm-final}, one can verify that the risk will always be decreased for inactive constraints. 
Lines 18-23 allocate the residual risk, created from decreasing the risk for inactive constraints, evenly among the active constraints. 
When $M_{\mathrm{active}}^x {=} K$, i.e. for all $i$ there is at least one active constraint, then, we simply divide the residual risk by the number of active constraints to get $\Delta {=} \sum_{i=1}^{K}\sum_{j=1}^{N_c}\sum_{k=1}^N \delta_{ijk}$. When $M_{\mathrm{active}}^x {<} K$, we want
\begin{equation}
    \delta_{\mathrm{res}} = \sum_{i\in I_{\mathrm{active}}} \sum_{j=1}^{N_c} \sum_{k=1}^N \alpha_i \delta_{ijk}
\end{equation}
where $I_{\mathrm{active}}$ is the set of active kernels. We first evenly distribute $\delta_{\mathrm{res}}$ among all active kernels by dividing by $M_{\mathrm{active}}^x$, then, for each kernel, we evenly distribute this by dividing by $\alpha_i\cdot N_{\mathrm{active},i}^x$.  

\begin{algorithm}[h]
\caption{Modified IRA-GMM Algorithm} \label{alg:ira}
\begin{algorithmic}[1]
\Require Convergence tolerance $\epsilon$, update parameter $\beta$ $(0<\beta<1)$
\State  $\forall(i,j,k) \quad \delta_{ijk} \gets \Delta /\left(N \cdot N_c\right)$
\State $\forall(i,k) \quad \gamma_{ik} \gets \Gamma / N$
\While {$\left|J^*-J_{\text {prev}}^*\right| > \epsilon$} 
    \State$J_{\text{prev}}^* \gets J^*$
\State Solve \cref{pr:final} with $\delta_{ijk}, \gamma_{ik}$
\State $N_{\mathrm{active,total}}^x, N_{\mathrm{active,total}}^u \gets$ total number of active constraints for state and control constraints
\If{ ($N_{\mathrm{active,total}}^x=N_{\mathrm{active,total}}^u = 0$) or ($N_{\mathrm{active,total}}^x=N \cdot N_c$ and $N_{\mathrm{active,total}}^u = N$) } {
    \textbf{break}}
\EndIf
\ForAll {$i$}
    \State $N_{\mathrm{active},i}^x, N_{\mathrm{active},i}^u \gets$ Number of active state (control)  constraints for $i$-th kernel
\EndFor
\State $M_{\mathrm{active}}^x = \sum_{i=1}^{K} \indicator_{N_{\mathrm{active},i}^x > 0}$
\State $M_{\mathrm{active}}^u =  \sum_{i=1}^{K} \indicator_{N_{\mathrm{active},i}^u > 0}$

\ForAll{ $i$ such that $j$-th hyperplane constraint is inactive for $i$-th kernel at time $t_k$}
   \State  $\delta_{ijk} \gets \beta \delta_{ijk} + (1-\beta)
   (1 - \Phi_{\normal} \left[\frac{b_j - a_j^\top \mu_k^i}{\sqrt{a_j^\top \Sigma_k^i a_j}} \right])$
\EndFor
\ForAll{ $i$ such that 2-norm constraint is inactive for $i$-th kernel at time $t_k$}
   \State  $\gamma_{ik} \gets \beta \gamma_{ik} + (1-\beta)
    (1 - \Phi_{\chi^2}\left[ \frac{ (y_{\max} - \norm{\mu_{k}^i} )^2}{\norm{\chol{\Sigma_{k}^i}}^2} \right]  )$
\EndFor

\State $\delta_{\mathrm{res}} \gets \Delta-\sum_{i=1}^{K} \sum_{j=1}^{N_c} \sum_{k=1}^N\alpha_i\delta_{ijk}$
\State $\gamma_{\mathrm{res}} \gets \Gamma - \sum_{i=1}^{K} \sum_{k=1}^N\alpha_i \gamma_{ik}$

\ForAll{$i$ such that $j$-th hyperplane constraint is active for $i$-th kernel at time $t_k$}
    \If{$M_{\mathrm{active}}^x = K$}
        \\
        {$\delta_{ijk} \gets \delta_{ijk} +\delta_{\mathrm{res}} /N_{\mathrm{active,total}}^x$}
    \Else {$\delta_{ijk} \gets \delta_{ijk} + \delta_{\mathrm{res}} / (\alpha_i\cdot N_{\mathrm{active},i}^x \cdot M_{\mathrm{active}}^x)$}
    \EndIf
\EndFor
\ForAll{$i$ such that 2-norm constraint is active for $i$-th kernel at time $t_k$}
    \If {$M_{\mathrm{active}}^u = K$}\\
        {$\gamma_{ik} \gets \delta_{ik} +\gamma_{\mathrm{res}} /N_{\mathrm{active,total}}^u$}
    \Else {$\gamma_{ik} \gets \gamma_{ik} + \gamma_{\mathrm{res}} / (\alpha_i\cdot N_{\mathrm{active},i}^u \cdot M_{\mathrm{active}}^u)$}
    \EndIf
\EndFor
\EndWhile
\end{algorithmic}
\end{algorithm}

\fi
\subsection{Discussion and Comparison with a Related Work} \label{sec:discussion}
\highlight{
Here, we remark on two key differences between our work and a related work \cite{balci_density_2023}. 
}

\highlight{
The first difference is the deterministic feedforward policy in \cref{eq:policy}.
In contrast to a probabilistic feedforward policy in \cite{balci_density_2023}, this provides a \textit{nominal} trajectory $\hat{x}$, defined as
\begin{equation}
    \hat{x}_{k+1} = A_k \hat{x}_k + B_k v_k, \quad \hat{x}_0 = \mu_0^g
\end{equation}
i.e. propagation from the initial mean with the feedforward control. 
The availability of such nominal trajectories permits a straightforward extension of the proposed approach to more complex problems. For instance, consider a nonlinear GM steering problem. Such a problem typically requires sequential solution methods, which detect convergence by evaluating the difference between the approximate solution based on the linearized system about a reference trajectory and its nonlinear response. The deterministic feedforward term makes it straightforward to calculate the nonlinear response deterministically. On the other hand, without such a nominal trajectory, the only analogous concept may be the mean trajectory, which, however, would require nonlinear uncertainty quantification, significantly increasing the computational demand. 
}

\highlight{
Another key difference lies in the fundamental philosophy of the problem formulation, which affects the optimality and computational efficiency in solving chance-constrained (CC) problems. \cite{balci_density_2023} decomposes the overall GM steering problem into multiple separate covariance steering problems (CSPs); this approach is less appealing for CC problems because, unlike unconstrained CSPs (which \cite{balci_density_2023} assumes), CC CSPs have no closed-form solution and need convex programming for a numerical solution \cite{okamoto_optimal_2018}. On the other hand, the proposed approach formulates the entire problem into a single convex programming under chance constraints. 
The decomposing approach can also induce conservativeness in the constraints. For instance, a simple approach to decompose \cref{eq:lambda_max<1} may be 
\begin{equation}
    \maxeig{\Sigma_f^{-\frac{1}{2}} \Sigma _N^i (\Sigma_f^{-\frac{1}{2}})^\top} \leq 1 \quad \forall i 
\end{equation}
which implies \cref{eq:lambda_max<1};  however, the opposite is false and hence suboptimal. On the other hand, \cref{prop:terminal-numerical} is \textit{exact} due to the fundamentally different problem formulation. 
}

\highlight{
Nevertheless, when considering non-constrained general GMM-to-GMM steering, \cite{balci_density_2023} is a powerful framework.
}
\section{Numerical Simulations} \label{sec:numerical}
We validate the proposed method with an example. Consider the following system with $N=20$:
\begin{equation*}
A_k = \smqty[1 & 0 & \Delta t & 0 \\
0 & 1 & 0 & \Delta t \\
0 & 0 & 1 & 0 \\
0 & 0 & 0 & 1], \ B_k=\smqty[
\Delta t^2 / 2 & 0 \\
0 & \Delta t^2 / 2 \\
\Delta t & 0 \\
0 & \Delta t
], \ \Delta t = 0.2, \quad \forall k
\end{equation*}%
The initial distribution is a Gaussian mixture with $K {=} 3$ kernels, with weights $(\alpha_1, \alpha_2, \alpha_3) = (0.3,0.4, 0.3)$, means $\mu_0^{(1)} = [5,{-1},5, 0]^\top$, $\mu_0^{(2)} = [3.5, 0.5, 8, 0]^\top$, $\mu_0^{(3)} = [4, -0.5, 7, 0]^\top$, and covariances $\Sigma_0^{(1)} = \Sigma_0^{(2)} = \Sigma_0^{(3)} = \diag{0.05,0.05,0.01,0.01}$. $N_c {=} 2$ affine state constraints are chosen as $a_1 = [1.3, {-1}, 0, 0]^\top, a_2 = [{-1}, 1, 0, 0]^\top, b_1 {=} 11, b_2 {=} {-1}$, with a joint violation probability of $\Delta {=}0.005$. The 2-norm constraint on control input is $u_{\max} {=} 6.5$, with a violation probability of $\Gamma {=} 0.005$. The target terminal distribution is $\mu_f = [8, 5.5, 0,  0]^\top, \Sigma_f = \diag{0.05,0.05,0.01,0.01}$. We choose the quadratic cost with $Q_k {=} 0, R_k {=} I$ for all $k$. 

All convex problems are solved using YALMIP \cite{lofberg_yalmip_2004} and MOSEK \cite{mosek_aps_mosek_2023}. First, we solve the problem without state chance constraints. 
\cref{fig:unconstrained} shows the problem setting and 1000 Monte Carlo sample trajectories. The samples are successfully steered to the target distribution. 
\cref{fig:constrained} shows the results when we impose state constraints \highlight{with URA}. Although the magnitude of the feedforward \highlight{control} is similar to the state-unconstrained case, the variance in magnitude is much larger under \highlight{state} constraints. 
\highlight{The state-unconstrained (constrained) case takes $\approx$1.5 (2.5) seconds on a standard laptop.}
\cref{fig:density} shows the theoretical \highlight{state} density evolution for the state-constrained case. The density \highlight{matches the Monte Carlo results from \cref{fig:constrained} well.}

\begin{figure}[t]
    \centering%
    \vspace{-5pt}%
    \begin{subfigure}[c]{0.3\linewidth}
        \includegraphics[width=\textwidth]{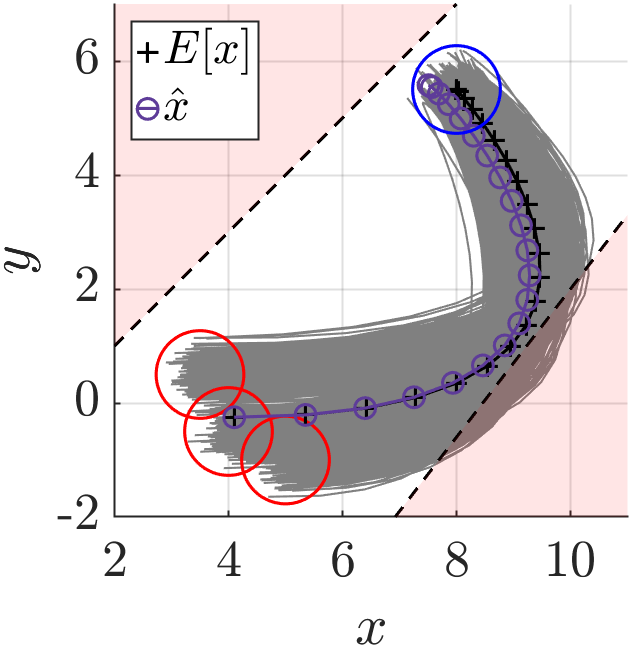}
    \end{subfigure}%
    \begin{subfigure}[c]{0.5\linewidth}
        \includegraphics[width=\textwidth]{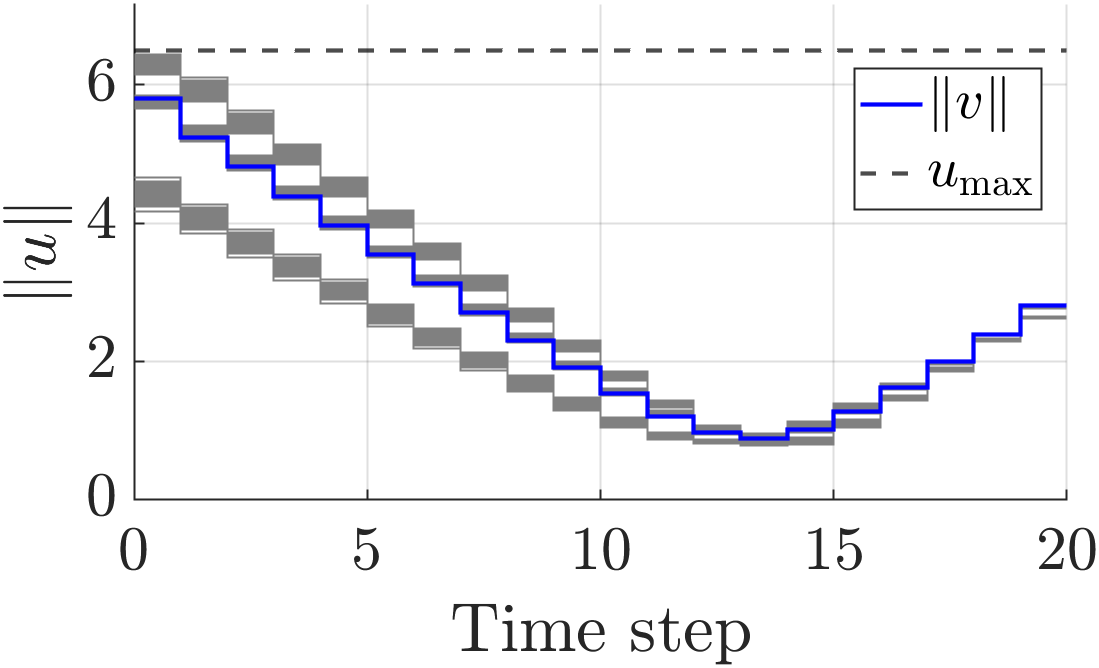}%
    \end{subfigure}
    \vspace{-5pt}
    \caption{\highlight{Monte Carlo with control chance constraints}}
    \label{fig:unconstrained}
    
    \begin{subfigure}[c]{0.3\linewidth}
    \includegraphics[width=\textwidth]{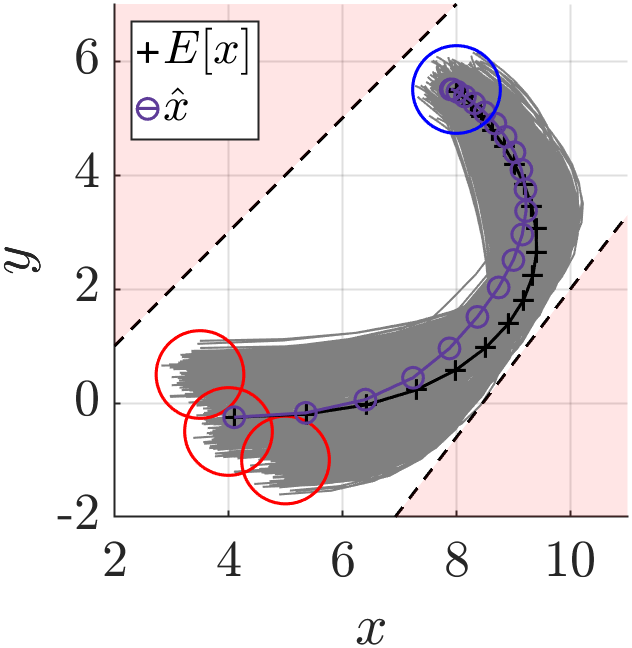}
    \end{subfigure}%
    \begin{subfigure}[c]{0.5\linewidth}
        \includegraphics[width=\textwidth]{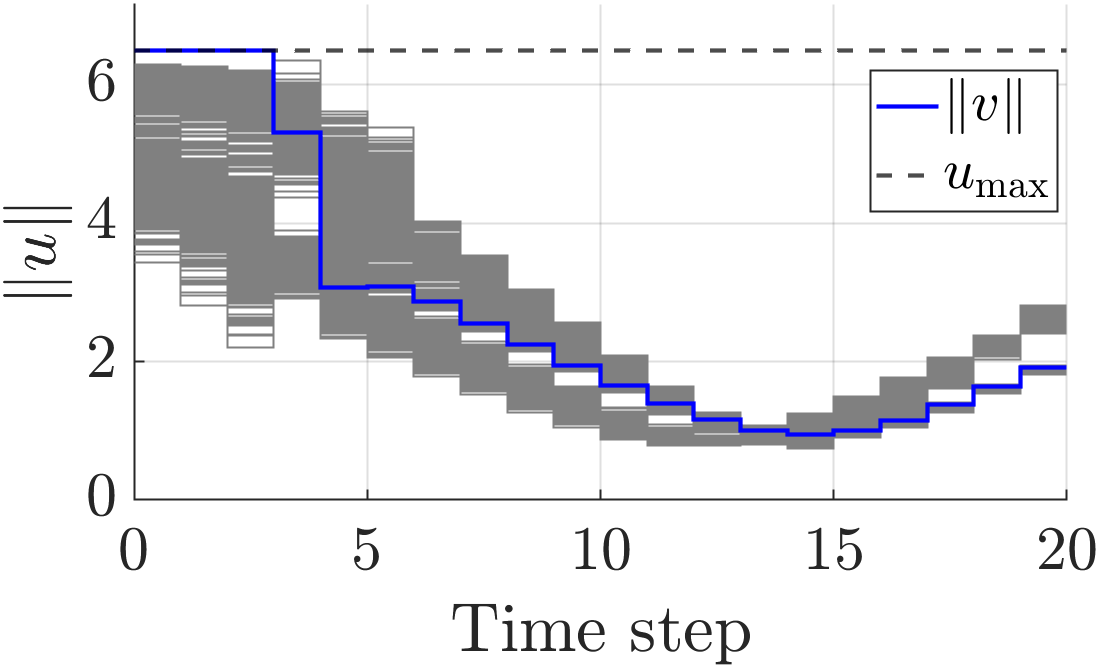}  
    \end{subfigure}%
    \vspace{-5pt}
    \caption{\highlight{Monte Carlo with state and control chance constraints}}
    \label{fig:constrained}
\end{figure}%

\begin{figure}[t]
    \centering
    \includegraphics[width=0.94\linewidth]{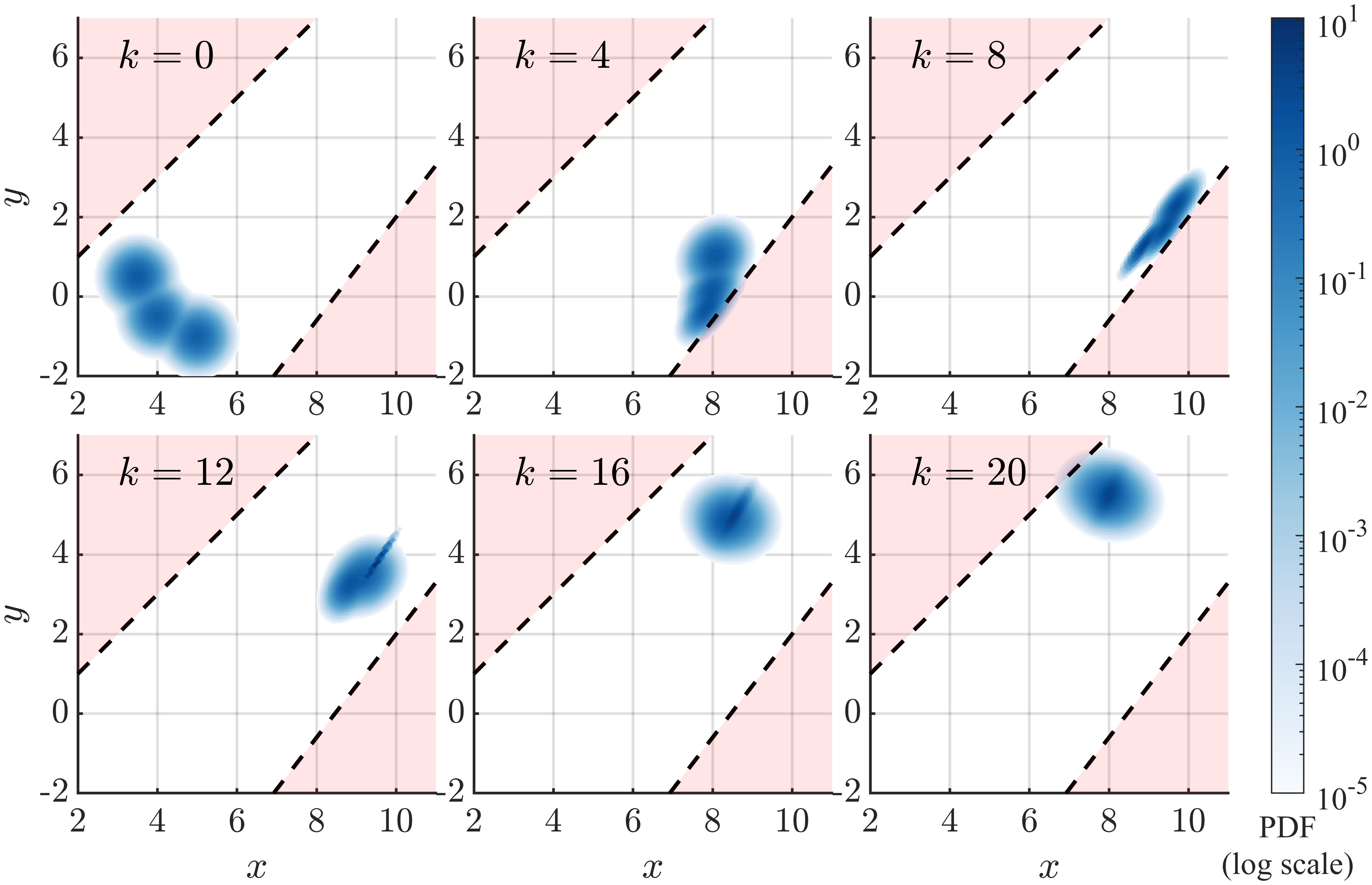}
    \vspace{-5pt}
    \caption{Evolution of \highlight{marginal} density for the path-constrained example.%
    }%
     \label{fig:density}
\end{figure}%

Next, we refine the solution to the state-constrained problem using the proposed IRA algorithm\highlight{, with $\epsilon {=} 10^{-2}$ and $\beta {=} 0.7$}. The algorithm converges after 13 iterations\highlight{, providing $\approx$5\% cost improvement}. 
\cref{fig:traj-comparison} compares the Monte Carlo trajectories with and without IRA. The mean\highlight{, nominal,} and \highlight{dispersed states} of the IRA-refined solution approach the halfplane closer.
\begin{figure}[t]
    \centering
    \begin{subfigure}[c]{0.4\linewidth}
        \includegraphics[width=\textwidth]{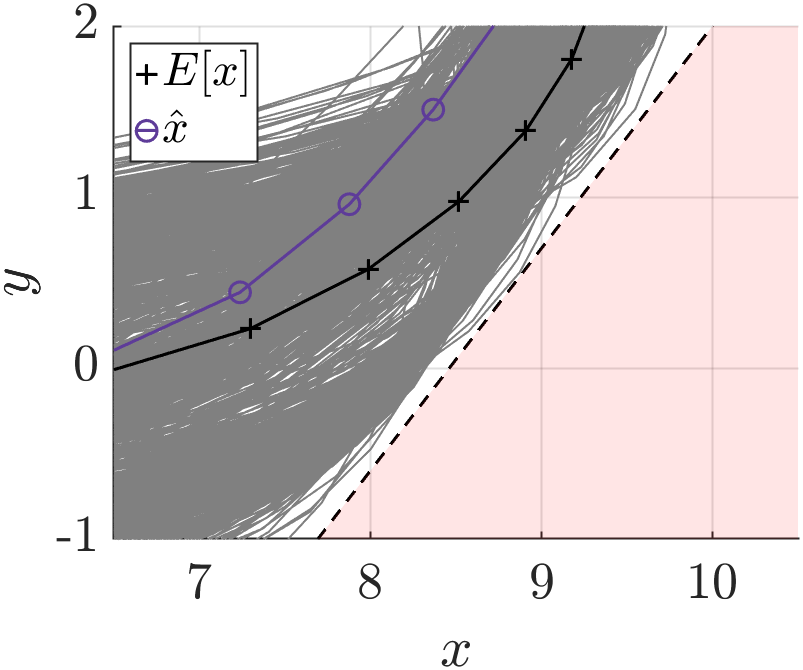}%
        \vspace{-5pt}
        \caption{without IRA \highlight{(URA)}} 
    \end{subfigure}%
    \begin{subfigure}[c]{0.4\linewidth}
        \includegraphics[width=\textwidth]{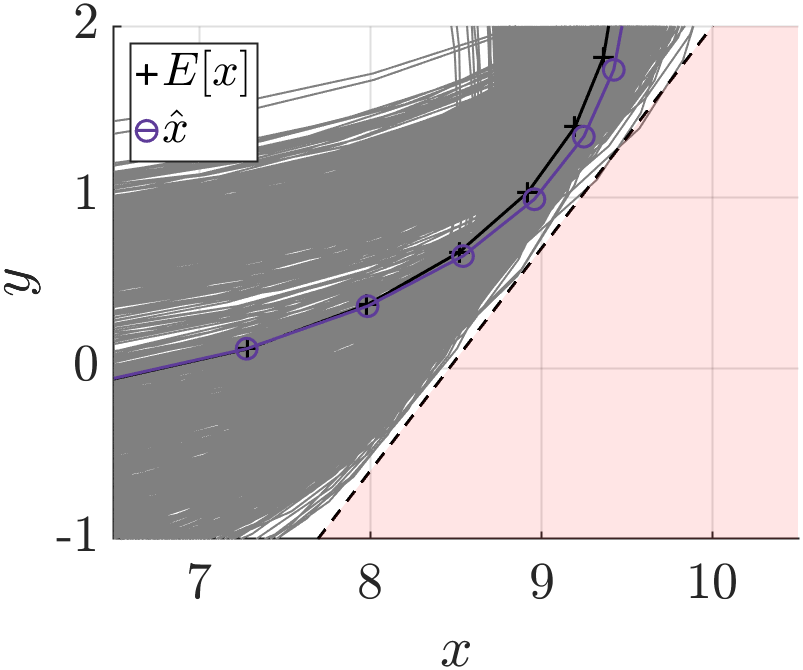}%
        \vspace{-5pt}
        \caption{with IRA}
    \end{subfigure}%
    \vspace{-5pt}%
    \caption{Comparison of the trajectory of Monte Carlo samples}%
    \vspace{-5pt}
    \label{fig:traj-comparison}
\end{figure}%
\if\shortOrFull1 

Finally, we compare the 2-norm cost for the same settings. An interesting observation is that the control magnitude is `bang-bang'-like but also shows the multi-modal distribution derived in \cref{prop:control-distribution}. The multi-modal structure is especially observable in the switching phase between maximal and minimal inputs.
Due to space restrictions, figures for the 2-norm cost are included in \cite{kumagai_chance-constrained_2024}.
\fi
\if\shortOrFull2 
The history of cost is shown in \cref{fig:IRA-history}. We see that the value decreases monotonically. 
\begin{figure}[tb]
    \centering
    \includegraphics[width=0.6\linewidth]{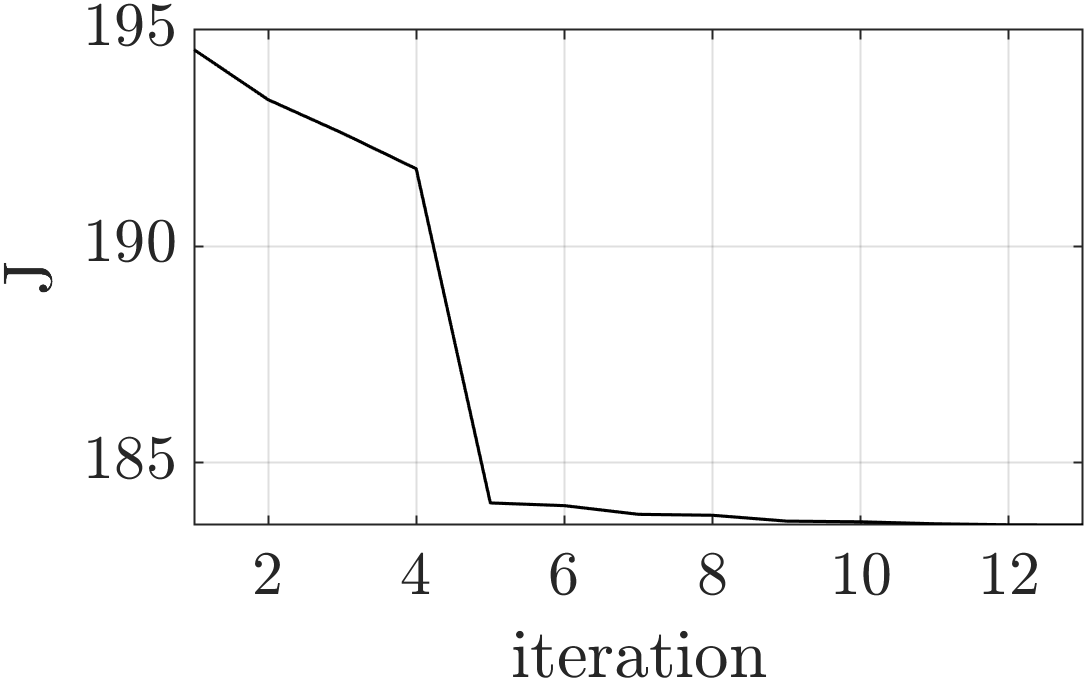}
    \caption{History of cost for the IRA algorithm}
    \label{fig:IRA-history}
\end{figure}

Finally, we compare the 2-norm cost for the same settings. The trajectory approaches the target more directly. The terminal covariance constraint is not active. The control magnitude is `bang-bang'-like but also shows the multi-modal distribution derived in \cref{prop:control-distribution}. The multi-modal structure is especially observable in the switching phase between maximal and minimal inputs.%
\begin{figure}[t]
    \centering
    \begin{subfigure}[c]{0.4\linewidth}
    \includegraphics[width=\textwidth]{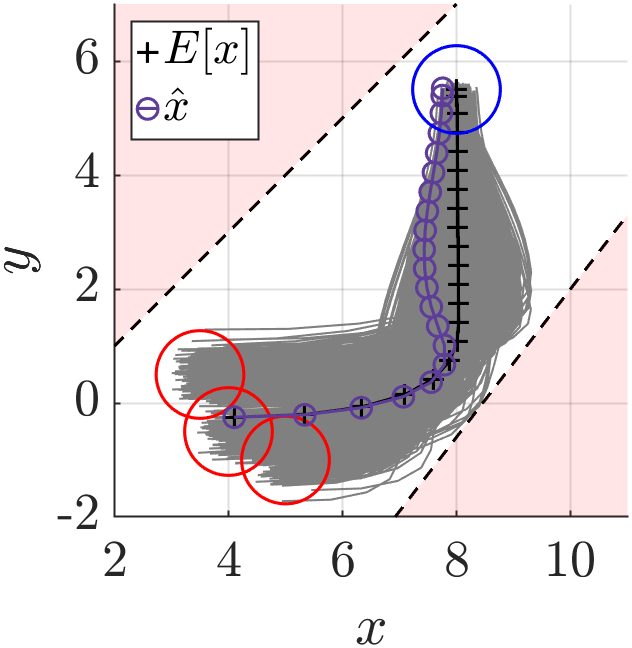} 
    \end{subfigure}%
    \begin{subfigure}[c]{0.5\linewidth}
        \includegraphics[width=\textwidth]{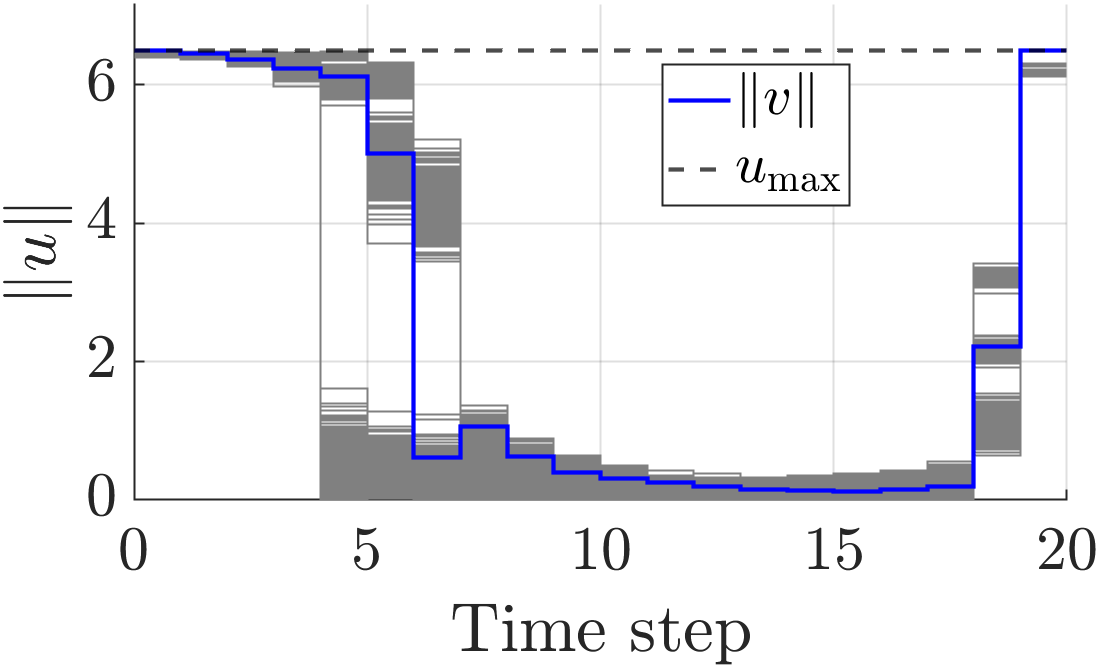}  
    \end{subfigure}
    \caption{(a) Trajectory and (b) control magnitude for 2-norm cost.}
    \label{fig:minfuel}
\end{figure}
\fi
\section{Conclusion}
We have addressed the problem of Gaussian mixture-to-Gaussian distribution steering under chance constraints. By using a probabilistically chosen affine control policy, the state and control distributions throughout the time horizon are fully characterized by Gaussian mixture models. The original problem is converted to a single \highlight{convex optimization} problem, by deriving the deterministic formulations for cost, terminal distributional constraint, and affine/2-norm chance constraints. We also modify the risk allocation algorithm for reduced conservativeness. 


\bibliographystyle{ieeetr}
\bibliography{CDC2024}

\begin{thebibliography}{10}

\bibitem{hotz_covariance_1987}
A.~Hotz and R.~E. Skelton, ``Covariance control theory,'' {\em Int. J. Control}, vol.~46, pp.~13--32, July 1987.

\bibitem{chen_optimal_2016}
Y.~Chen, T.~T. Georgiou, and M.~Pavon, ``Optimal {Steering} of a {Linear} {Stochastic} {System} to a {Final} {Probability} {Distribution}, {Part} {I},'' {\em IEEE Trans. Autom. Control}, vol.~61, pp.~1158--1169, May 2016.

\bibitem{okamoto_optimal_2018}
K.~Okamoto, M.~Goldshtein, and P.~Tsiotras, ``Optimal {Covariance} {Control} for {Stochastic} {Systems} {Under} {Chance} {Constraints},'' {\em IEEE Control Syst. Lett.}, vol.~2, pp.~266--271, Apr. 2018.

\bibitem{liu_optimal_2023}
F.~Liu, G.~Rapakoulias, and P.~Tsiotras, ``Optimal {Covariance} {Steering} for {Discrete}-{Time} {Linear} {Stochastic} {Systems},'' 2023.
\newblock arXiv:2211.00618.

\bibitem{sivaramakrishnan_distribution_2022}
V.~Sivaramakrishnan, J.~Pilipovsky, M.~Oishi, and P.~Tsiotras, ``Distribution {Steering} for {Discrete}-{Time} {Linear} {Systems} with {General} {Disturbances} using {Characteristic} {Functions},'' in {\em 2022 {Amer.} {Control} {Conf.}}, pp.~4183--4190.

\bibitem{caluya_reflected_2021}
K.~F. Caluya and A.~Halder, ``Reflected {Schrödinger} {Bridge}: {Density} {Control} with {Path} {Constraints},'' in {\em 2021 {Amer.} {Control} {Conf.}}, pp.~1137--1142.

\bibitem{boone_non-gaussian_2022}
S.~Boone and J.~McMahon, ``Non-{Gaussian} {Chance}-{Constrained} {Trajectory} {Control} {Using} {Gaussian} {Mixtures} and {Risk} {Allocation},'' in {\em 2022 {IEEE} 61st {Conf.} {Decis.} {Control}}, pp.~3592--3597.

\bibitem{ono_iterative_2008}
M.~Ono and B.~C. Williams, ``Iterative {Risk} {Allocation}: {A} new approach to robust {Model} {Predictive} {Control} with a joint chance constraint,'' in {\em 2008 47th {IEEE} {Conf.} {Decis.} {Control}}, pp.~3427--3432.

\bibitem{hu_chance_2022}
Z.~Hu, W.~Sun, and S.~Zhu, ``Chance constrained programs with {Gaussian} mixture models,'' {\em IISE Trans.}, vol.~54, pp.~1117--1130, Dec. 2022.

\bibitem{yang_analytical_2020}
Y.~Yang, W.~Wu, B.~Wang, and M.~Li, ``Analytical {Reformulation} for {Stochastic} {Unit} {Commitment} {Considering} {Wind} {Power} {Uncertainty} {With} {Gaussian} {Mixture} {Model},'' {\em IEEE Trans. Power Syst.}, vol.~35, pp.~2769--2782, July 2020.

\bibitem{ren_chance-constrained_2023}
K.~Ren, H.~Ahn, and M.~Kamgarpour, ``Chance-{Constrained} {Trajectory} {Planning} {With} {Multimodal} {Environmental} {Uncertainty},'' {\em IEEE Control Syst. Lett.}, vol.~7, pp.~13--18, 2023.

\bibitem{balci_density_2023}
I.~M. Balci and E.~Bakolas, ``Density {Steering} of {Gaussian} {Mixture} {Models} for {Discrete}-{Time} {Linear} {Systems},'' 2023.
\newblock arXiv:2311.08500.

\bibitem{stergiopoulos_advanced_2017}
S.~Stergiopoulos, ed., {\em Advanced {Signal} {Processing} {Handbook}}.
\newblock Boca Raton, FL, USA: CRC Press, 2017.

\bibitem{dempster_maximum_1977}
A.~P. Dempster, N.~M. Laird, and D.~B. Rubin, ``Maximum {Likelihood} from {Incomplete} {Data} via the {EM} {Algorithm},'' {\em J. Roy. Statist. Soc. Ser. B Methodol.}, vol.~39, no.~1, pp.~1--38, 1977.

\bibitem{goldshtein_finite-horizon_2017}
M.~Goldshtein and P.~Tsiotras, ``Finite-horizon covariance control of linear time-varying systems,'' in {\em 2017 {IEEE} 56th {Conf.} {Decis.} {Control}}, pp.~3606--3611.

\bibitem{grimmett_probability_2020}
G.~Grimmett and D.~Stirzaker, {\em Probability and {Random} {Processes}: {Fourth} {Edition}}.
\newblock Oxford, NY, USA: Oxford University Press, 2020.

\bibitem{chakraborty_applications_2008}
S.~Chakraborty, ``Some {Applications} of {Dirac}'s {Delta} {Function} in {Statistics} for {More} {Than} {One} {Random} {Variable},'' {\em Appl. Appl. Math. [Online]}, vol.~3, Jan. 2008.

\bibitem{bakolas_optimal_2016}
E.~Bakolas, ``Optimal covariance control for discrete-time stochastic linear systems subject to constraints,'' in {\em 2016 {IEEE} 55th {Conf.} {Decis.} {Control}}, pp.~1153--1158.

\bibitem{blackmore_convex_2009}
L.~Blackmore and M.~Ono, ``Convex {Chance} {Constrained} {Predictive} {Control} {Without} {Sampling},'' in {\em {AIAA} {Guid.} {Navig.} {Control} {Conf.}}, Aug. 2009.

\bibitem{oguri_chance-constrained_2024}
K.~Oguri, ``Chance-{Constrained} {Control} for {Safe} {Spacecraft} {Autonomy}: {Convex} {Programming} {Approach},'' in {\em 2024 {Amer.} {Control} {Conf.}}

\bibitem{lofberg_yalmip_2004}
J.~Lofberg, ``{YALMIP} : a toolbox for modeling and optimization in {MATLAB},'' in {\em 2004 {IEEE} {Int.} {Conf.} {Robot.} {Automat.}}, pp.~284--289.

\bibitem{mosek_aps_mosek_2023}
{MOSEK ApS}, ``The {MOSEK} optimization toolbox for {MATLAB} manual. {Version} 10.1.,'' 2024.
\newblock [Online]. Available: \url{https://docs.mosek.com/10.1/toolbox/index.html}.

\end{thebibliography}
\end{document}